\newtheorem{theorem}{Theorem}[section]
\newtheorem{corollary}[theorem]{Corollary}
\newtheorem{lemma}[theorem]{Lemma}
\newtheorem{proposition}[theorem]{Proposition}
\newtheorem{Definition}[theorem]{Definition}
\newtheorem{Example}[theorem]{Example}
\newtheorem{Remark}[theorem]{Remark}
\newenvironment{remark}{\begin{Remark}\begin{em}}{\end{em}\end{Remark}}
\DeclareMathOperator{\tr}{tr}
\address{Jinmi Hwang\\
Department of Mathematics, Sungkyunkwan University, Suwon 16419, Korea.}
\email{jinmi0401@skku.edu}
\address{Sejong Kim \\ Department of Mathematics, Chungbuk National University, Cheongju 28644, Korea}
\email{skim@chungbuk.ac.kr}
\begin{document}

\author[Jinmi Hwang and Sejong Kim]{Jinmi Hwang and Sejong Kim}

\title[Near-order relation of power means]{Near-order relation of power means}

\date{}
\maketitle

\begin{abstract}
On the setting of positive definite operators we study the near-order properties of power means such as the quasi-arithmetic mean (H\"{o}lder mean) and R\'{e}nyi power mean. We see the monotonicity of spectral geometric mean and Wasserstein mean on parameters with respect to the near-order and the near-order relationship between the spectral geometric mean and Wasserstein mean. Furthermore, the monotonicity of quasi-arithmetic mean on parameters and the convergence of R\'{e}nyi power mean to the log-Euclidean mean with respect to the near-order have been established.

\vspace{5mm}

\noindent {\bf Mathematics Subject Classification} (2020): 47B65, 15B48

\noindent {\bf Keywords}:  Near-order, metric geometric mean, spectral geometric mean, Wasserstein mean, quasi-arithmetic mean, R\'{e}nyi power mean
\end{abstract}

\section{Introduction}

Kubo and Ando \cite{KA} have established the theory of operator means for two variables on the open convex cone of positive definite operators. The crucial property of Kubo-Ando's operator mean is the monotonicity for Loewner partial order:
\begin{center}
$A \sigma B \leq C \sigma D \quad$ whenever $\quad A \leq C$ and $B \leq D$
\end{center}
for positive definite operators $A, B, C, D$, where $\sigma$ denotes the Kubo-Ando's operator mean. This is related to the operator monotone function corresponding to the Kubo-Ando's operator mean.
On the other hand, there are lots of two-variable means which do not fulfill the monotonicity such as the spectral geometric mean \cite{FP, LL} and Wasserstein mean \cite{BJL-2, HK19}.

One of the important multi-variable means of positive definite matrices is a least squares mean for the Riemannian trace metric, named a Cartan (Karcher) mean. It has been known in \cite{Ka77} that the Cartan mean coincides with a unique positive definite solution $X$ of the Karcher equation
\begin{displaymath}
\sum_{j=1}^{n} w_{j} \log (X^{1/2} A_{j}^{-1} X^{1/2}) = 0,
\end{displaymath}
where $A_{j}$'s are positive definite matrices and $\omega = (w_{1}, \dots, w_{n})$ is a positive probability vector. This allows us to define the Karcher mean of positive definite operators \cite{LL14}. The monotonicity of Karcher mean is a long-standing open problem, but it has been proved in \cite{LL14} by using the approach of power means.

New order relation, called a \emph{near-order}, has been recently introduced in \cite{DF24}:
\begin{center}
$A \preceq B \quad$ if and only if $\quad A^{-1} \# B \geq I$.
\end{center}
Here, $A^{-1} \# B = A^{-1/2} (A^{1/2} B A^{1/2})^{1/2} A^{-1/2}$ is the geometric mean of $A^{-1}$ and $B$, which is a typical example of the Kubo-Ando's operator mean. One can easily see that $A \leq B$ implies $A \preceq B$. Unlike the Loewner-Heinz inequality, an interesting property of the near-order is as follows:
\begin{center}
$A \preceq B \quad$ implies $\quad A^{p} \preceq B^{p} \quad$ for all $p \geq 1$.
\end{center}
One of the main results is the monotonicity of spectral geometric mean and Wasserstein mean on parameters with respect to the near-order.

Several near-order inequalities of spectral geometric mean and Wasserstein mean for two variables have been shown in \cite{DF24, GH}, but those of multi-variable means are rare. In this paper we study two kinds of the multi-variable power mean, named the quasi-arithmetic mean and R\'{e}nyi power mean. For given positive definite operators $A_{1}, \dots, A_{n}$, the quasi-arithmetic mean $Q_{p}$ of order $p \neq 0$ is defined by
\begin{displaymath}
Q_{p}(\omega; A_{1}, \dots, A_{n}) = \left(\sum^{n}_{j=1} w_{j} A_{j}^{p} \right)^{1/p},
\end{displaymath}
and the R\'{e}nyi power mean $\mathcal{R}_{t,z}$ for $0 \leq t < z \leq 1$ is defined as a unique positive definite solution $X$ of the equation
\begin{displaymath}
X = \sum_{j=1}^{n} w_{j} \left( A_{j}^{\frac{1-t}{2z}} X^{\frac{t}{z}} A_{j}^{\frac{1-t}{2z}} \right)^{z}.
\end{displaymath}
The main results of multi-variable power means with respect to the near-order are the monotonicity of quasi-arithmetic mean on both variables and parameters and the inequalities of R\'{e}nyi power mean with the quasi-arithmetic mean and log-Euclidean mean.
In particular, the monotonicity of quasi-arithmetic mean provides the following chain: for $0 < p \leq q \leq 1$
\begin{displaymath}
Q_{-1/p} \leq Q_{-1/q} \leq Q_{-1} = \mathcal{H} \preceq Q_{-q} \preceq Q_{-p} \preceq \mathrm{LE} \preceq Q_{p} \preceq Q_{q} \leq \mathcal{A} = Q_{1} \leq Q_{1/q} \leq Q_{1/p},
\end{displaymath}
where $\displaystyle \mathrm{LE} = \lim_{p \to 0} Q_{p}$ denotes the log-Euclidean mean.

As an extension of Lie-Trotter formula, new type of multi-variable mean, called a Lie-Trotter mean, has been introduced in \cite{HK17}. It is known that multi-variable means between the arithmetic and harmonic means with respect to the Loewner order are Lie-Trotter means, for instance, Karcher mean. We generalize such a consequence to multi-variable means between the arithmetic and harmonic means with respect to the near-order.

\section{Two-variable operator means}

Let $B(\mathcal{H})$ be the Banach space of all bounded linear operators on a Hilbert space $\mathcal{H}$ with inner product $\langle \cdot, \cdot \rangle$, and let $S(\mathcal{H}) \subset B(\mathcal{H})$ be the closed subspace of all self-adjoint operators. We call $A \in S(\mathcal{H})$ positive semi-definite if $\langle x, Ax \rangle \geq 0$ for all $x \in \mathcal{H}$, and positive definite if $\langle x, Ax \rangle > 0$ for all nonzero $x \in \mathcal{H}$. We denote as $\mathbb{P} \subset S(\mathcal{H})$ the open convex cone of all positive definite operators. The group $\textrm{GL}$ of all invertible operators transitively acts on $\mathbb{P}$ via congruence transformation. On the finite-dimensional setting $\mathcal{H} = \mathbb{C}^{m}$ we write as $\mathbb{H}_{m}$ and $\mathbb{P}_{m}$ respectively the real vector space of all $m \times m$ Hermitian matrices and the open convex cone of all $m \times m$ positive definite matrices.

The Thompson metric $d_{T}$ on $\mathbb{P}$ is defined by
\begin{displaymath}
d_{T} (A, B) = \max \{ \log M(A/B), \log M(B/A) \},
\end{displaymath}
where $M(B/A) = \inf \{ \lambda > 0 : B \leq \lambda A \}$.
Here, $\leq$ denotes the \emph{Loewner partial order} on $S(\mathcal{H})$.
Note from \cite{CPR, Th} that
\begin{displaymath}
d_{T} (A, B) = \Vert \log A^{-1/2} B A^{-1/2} \Vert
\end{displaymath}
for the operator norm $\Vert \cdot \Vert$, and $d_{T}$ is a complete metric on $\mathbb{P}$.
The following are well-known fundamental properties for $d_{T}$.

\begin{lemma} \label{L:Thompson_metric}
Let $A, B, C, D \in \mathbb{P}$. The Thompson metric satisfies
\begin{itemize}
\item[(1)] $d_{T} (A, B) = d_{T} (A^{-1}, B^{-1})= d_{T} (M A M^{*}, M B M^{*})$ for any $M \in \mathrm{GL}$;
\item[(2)] $d_{T} (A + B, C + D) \leq \max \{ d_{T} (A, C), d_{T} (B, D) \}$;
\item[(3)] $d_{T} (A^{t}, B^{t}) \leq t d_{T} (A, B)$ for any $t \in [0,1]$.
\end{itemize}
\end{lemma}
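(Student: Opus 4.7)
The plan is to establish each of the three assertions by direct manipulation of the scalar quantity $M(X/Y) = \inf\{\lambda > 0 : X \leq \lambda Y\}$, together with two structural facts: that inversion reverses the Loewner order on $\mathbb{P}$, and that the infimum defining $M(X/Y)$ is attained (so $X \leq M(X/Y)\,Y$). The latter follows because the set $\{\lambda > 0 : X \leq \lambda Y\}$ is nonempty and closed in $(0,\infty)$.

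For (1), I would first observe that for $\lambda > 0$ and $A, B \in \mathbb{P}$, the inequality $A \leq \lambda B$ is equivalent to $B^{-1} \leq \lambda A^{-1}$; this is obtained by conjugating with $A^{1/2}$ to rewrite the inequality as $I \leq \lambda A^{-1/2} B A^{-1/2}$, inverting, and conjugating back by $A^{-1/2}$. Hence $M(A/B) = M(B^{-1}/A^{-1})$, and swapping $A$ and $B$ gives the partner equality; taking the maximum of the two logarithms produces $d_T(A,B) = d_T(A^{-1}, B^{-1})$. Congruence invariance is even more immediate: for $M \in \mathrm{GL}$, the inequalities $MAM^* \leq \lambda MBM^*$ and $A \leq \lambda B$ are equivalent, so $M(MAM^*/MBM^*) = M(A/B)$, and the same applies to the reversed ratio.

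For (2), I would use the attained inequalities $A \leq M(A/C)\,C$ and $B \leq M(B/D)\,D$. Adding them and bounding each coefficient by the larger one yields $A + B \leq \max\{M(A/C), M(B/D)\}\,(C + D)$, so $M((A+B)/(C+D)) \leq \max\{M(A/C), M(B/D)\}$. The symmetric bound on $M((C+D)/(A+B))$ follows by the same argument with the roles exchanged, and taking logarithms followed by the outer maximum yields the claim.

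For (3), I would invoke the Loewner-Heinz inequality: for $t \in [0,1]$ the map $X \mapsto X^t$ is operator monotone on $\mathbb{P}$. Thus $A \leq \lambda B$ implies $A^t \leq (\lambda B)^t = \lambda^t B^t$, where the last equality uses that $\lambda I$ commutes with $B$. This gives $M(A^t/B^t) \leq M(A/B)^t$, hence $\log M(A^t/B^t) \leq t \log M(A/B)$, and combining with the symmetric inequality under the maximum produces $d_T(A^t, B^t) \leq t\, d_T(A, B)$. None of the three parts presents a substantive obstacle; the only points to be mindful of are the order-reversal of inversion in (1) and the closedness argument for the attainment of $M(X/Y)$ in (2), while the Loewner-Heinz inequality in (3) is a classical fact that I would simply quote.
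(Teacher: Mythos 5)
Your proof is correct. The paper does not prove this lemma at all --- it states these properties as well-known facts for the Thompson metric, deferring to the cited references of Thompson and Corach--Porta--Recht --- and your argument is the standard one: all three parts reduce to elementary manipulations of $M(X/Y)$, with the order-reversal of inversion and congruence-invariance of the Loewner order giving (1), attainment of the infimum plus $\max\{\alpha,\beta\}(C+D) \geq \alpha C + \beta D$ giving (2), and the Loewner--Heinz inequality together with $(\lambda B)^{t} = \lambda^{t} B^{t}$ giving (3). The one point worth being explicit about in the operator setting is that elements of $\mathbb{P}$ are assumed invertible (as the paper's free use of $A^{-1}$ confirms), which is what makes the set $\{\lambda > 0 : X \leq \lambda Y\}$ nonempty and hence the attainment argument in (2) and (3) legitimate; you flag exactly this, so there is no gap.
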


The \emph{metric geometric mean} of $A, B \in \mathbb{P}$ is defined as
\begin{equation} \label{E:metric-geomean}
A \#_{t} B := A^{1/2} (A^{-1/2} B A^{-1/2})^{t} A^{1/2}
\end{equation}
for $t \in [0,1]$. We simply write $A \# B = A \#_{1/2} B$, and note that $A \# B$ is the unique solution $X \in \mathbb{P}$ of the Riccati equation $X A^{-1} X = B$.
By Lemma \ref{L:Thompson_metric} the metric geometric mean satisfies the following convexity for $d_{T}$:
\begin{equation} \label{E:continuity}
d_{T}(A \#_{s} B, C \#_{t} D) \leq (1-t) \, d_{T}(A, C) + t \, d_{T}(B, D) + |s-t| \, d_{T}(C, D)
\end{equation}
for $A, B, C, D \in \mathbb{P}$ and $s, t \in [0,1]$. This means that the map
\begin{displaymath}
[0,1] \times \mathbb{P}^{2} \ni (t, A, B) \mapsto A \#_{t} B \in \mathbb{P}
\end{displaymath}
is continuous for the Thompson metric. The definition \eqref{E:metric-geomean} can be extended to all $t \in \mathbb{R}$.

The \emph{spectral geometric mean} of $A, B \in \mathbb{P}$ for $t \in [0,1]$ is defined as
\begin{equation} \label{E:spectral-geomean}
A \natural_{t} B = (A^{-1} \# B)^{t} A(A^{-1} \# B )^{t}.
\end{equation}
It is first introduced for $t = 1/2$ by Fiedler and Pták \cite{FP}, and many properties analogous to the metric geometric mean have been studied \cite{GK24, Kim21, LL}. Analogous to the metric geometric mean, the definition \eqref{E:spectral-geomean} can be extended to all $t \in \mathbb{R}$. Especially from \cite{GK23}, the spectral geometric mean is a geodesic for the semi-metric $d(A, B) = 2 \Vert \log (A^{-1} \# B) \Vert$ on $\mathbb{P}$:
\begin{displaymath}
d(A \natural_{s} B, A \natural_{t} B) \leq |s-t| \, d(A, B)
\end{displaymath}
for any $s, t \in \mathbb{R}$.

The \emph{Wasserstein mean} of $A, B \in \mathbb{P}$ for $t \in [0,1]$ is defined as
\begin{displaymath}
A \diamond_{t} B = (1-t)^{2} A + t^{2} B + t(1-t) \left[ A (A^{-1} \# B) + (A^{-1} \# B) A \right].
\end{displaymath}
This can be written as
\begin{equation} \label{E:Wass-expression}
A \diamond_{t} B = \left[ I \nabla_{t} (A^{-1} \# B) \right] A \left[ I \nabla_{t} (A^{-1} \# B) \right],
\end{equation}
where $A \nabla_{t} B := (1-t) A + t B$ is the weighted arithmetic mean of $A$ and $B$. Since the Wasserstein mean $A \diamond_{t} B$ is the congruence transformation on $A \in \mathbb{P}$ via $I \nabla_{t} (A^{-1} \# B)$, it can be defined for all $t \in \mathbb{R}$. It is known from \cite[Theorem 5.1]{HK19} that $A \diamond_{t} B$ is the unique solution $X \in \mathbb{P}$ of the equation
\begin{displaymath}
I = (1-t) (A \# X^{-1}) + t (B \# X^{-1}).
\end{displaymath}
In the finite-dimensional setting of $\mathbb{P}_{m}$, Wasserstein mean is the least squares mean for Bures-Wasserstein distance:
\begin{displaymath}
A \diamond_{t} B = \underset{X \in \mathbb{P}_{m}}{\arg \min} (1-t) d_{W}^{2}(X, A) + t d_{W}^{2}(X, B),
\end{displaymath}
where $d_{W}(A, B) = \left[ \tr (A + B) - 2 \tr (A^{1/2} B A^{1/2})^{1/2} \right]^{1/2}$ is the Bures-Wasserstein distance of $A, B \in \mathbb{P}_{m}$.
See \cite{GK24, HK19, HK22} for more properties of the Wasserstein mean.

\section{Near-order of positive definite operators}

The metric geometric mean satisfies the monotonicity on variables with respect to the Loewner order:
\begin{displaymath}
A \leq B \ \text{and} \ C \leq D \quad \Longrightarrow  \quad A \#_{t} C \leq B \#_{t} D \quad \textrm{for all} \ t \in [0,1].
\end{displaymath}
By the monotonicity, if $A \leq B$ then $A^{-1} \# B \geq I.$
From this point of view, Dumitru and Franco \cite{DF24} introduced a new relation $\preceq$ on $\mathbb{P}$, which is called \emph{near-order}.
For $A, B \in \mathbb{P}$,
\begin{equation} \label{E:near-order}
A \preceq B \quad \text{if and only if} \quad A^{-1} \# B \geq I.
\end{equation}
The reflexive, antisymmetric, and modified versions of transitive properties, called the near-transitivity, hold for the near-order.
Moreover, for $A, B \in \mathbb{P}$
\begin{equation} \label{E:log}
A \leq B \quad \Longrightarrow \quad \log A \leq \log B \quad \Longrightarrow \quad A \preceq B.
\end{equation}
Note that $A \ll B$, called the \emph{chaotic order}, if and only if $\log A \leq \log B$.
Some properties for chaotic order, and monotonicity and convexity properties for near-order have been studied in \cite{FJKT} and \cite{DF24} respectively.

A map $f: S(\mathcal{H}) \to S(\mathcal{H})$ is operator monotone (monotone increasing) if
\begin{center}
$f(A) \leq f(B)  \quad$ whenever $\quad A \leq B$.
\end{center}
Note that the map $X \mapsto X^{p}$ for $p \in [0,1]$ on $\mathbb{P}$ is monotone increasing and the map $X \mapsto X^{p}$ for $p \in [-1,0]$ is monotone decreasing with respect to the Loewner order \cite{Bh}. This is known as the \emph{Loewner-Heinz inequality}.
On the other hand, the following holds for the near-order.
\begin{proposition} \cite{DF24} \label{P:mono-p>1}
Let $A, B \in \mathbb{P}$. Then $A \preceq B$ implies
\begin{itemize}
  \item[(i)] $A^{p}\preceq B^{p} \quad$ for $p \geq 1$, and
  \item[(ii)] $B^{p}\preceq A^{p} \quad$ for $p \leq -1$.
\end{itemize}
\end{proposition}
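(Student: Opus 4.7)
My plan is to reduce both parts to the Ando--Hiai inequality: for $X, Y \in \mathbb{P}$, if $X \# Y \leq I$ then $X^p \# Y^p \leq I$ for every $p \geq 1$. The preliminary step is to recast the near-order in a form compatible with this inequality. Using the identity $(X \# Y)^{-1} = X^{-1} \# Y^{-1}$ together with the symmetry $X \# Y = Y \# X$, one immediately verifies the chain of equivalences
\[
A \preceq B \iff A^{-1} \# B \geq I \iff A \# B^{-1} \leq I \iff B^{-1} \# A \leq I.
\]

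For part (i), I would apply Ando--Hiai with $X = A$ and $Y = B^{-1}$ to the reformulation $A \# B^{-1} \leq I$, obtaining $A^p \# B^{-p} \leq I$ for every $p \geq 1$. Inverting once more yields $A^{-p} \# B^p \geq I$, which is precisely $A^p \preceq B^p$.

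For part (ii), write $p = -q$ with $q \geq 1$. Starting from $B^{-1} \# A \leq I$, apply Ando--Hiai with $X = B^{-1}$ and $Y = A$ to obtain $B^{-q} \# A^q \leq I$. Inverting gives $B^q \# A^{-q} \geq I$, i.e., $(B^p)^{-1} \# A^p \geq I$, which is exactly $B^p \preceq A^p$.

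The substantive content sits entirely in the Ando--Hiai inequality; all other steps are formal identities for the metric geometric mean together with the fact that $X \geq I \iff X^{-1} \leq I$ on $\mathbb{P}$. The main obstacle, therefore, is invoking Ando--Hiai as a black box, but this is a classical result available in the literature. An alternative route via Furuta's inequality would yield the same conclusion after analogous bookkeeping.
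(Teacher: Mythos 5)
Your proof is correct: the reformulation $A \preceq B \iff A \# B^{-1} \leq I$ is exact, and both parts then follow from the Ando--Hiai inequality applied to the pairs $(A, B^{-1})$ and $(B^{-1}, A)$, with only formal identities for $\#$ in between. The paper cites this proposition from \cite{DF24} without reproducing a proof, and the Ando--Hiai route is exactly the argument used there, so your approach matches the source; the one point to be careful about is that in the setting $\mathbb{P} \subset B(\mathcal{H})$ you need the operator (not just matrix) version of Ando--Hiai, which is available in the literature, e.g.\ via its equivalence with Furuta-type inequalities, as you note.
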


The in-betweenness property of the metric geometric mean for the Loewner order is well-known:
\begin{displaymath}
A \leq A \#_{t} B \leq B \quad \text{whenever} \quad A \leq B
\end{displaymath}
for any $t \in [0,1]$. Unfortunately this is not satisfied for the spectral geometric mean and Wasserstein mean.
On the other hand, their in-betweenness properties for the near-order have been shown:
\begin{displaymath}
A \preceq A \natural_{t} B \preceq B \quad \textrm{and} \quad A \preceq A \diamond_{t} B \preceq B.
\end{displaymath}
%
%
In addition, we summarize the equivalent relations for $A \preceq B$ in terms of metric, spectral geometric means and Wasserstein mean.
\begin{theorem} \cite{DF24, GH}
Let $A, B \in \mathbb{P}$. The following are equivalent:
\begin{itemize}
  \item[(1)] $A \preceq B$,
  \item[(2)] $A^{-1} \# B \geq I$,
  \item[(3)] $A \# B^{-1} \leq I$,
  \item[(4)] $A \natural_{t} B \succeq A$ for some $t \in (0,1]$,
  \item[(5)] $A \natural_{t} B \preceq B$ for some $t \in [0,1)$,
  \item[(6)] $A \diamond_{t} B \succeq A$ for some $t \in (0,1]$,
  \item[(7)] $A \diamond_{t} B \preceq B$ for some $t \in [0,1)$.
\end{itemize}
\end{theorem}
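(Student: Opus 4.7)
The plan is to reduce each of the seven conditions to a single spectral statement about the auxiliary operator $G := A^{-1} \# B$, namely $G \geq I$. The equivalence of (1) and (2) is the definition of the near-order. I would dispatch (2) $\Leftrightarrow$ (3) by first noting the identity $(A \# B^{-1})^{-1} = A^{-1} \# B$, which is obtained by inverting both sides of the Riccati equation $X A^{-1} X = B^{-1}$ defining $X = A \# B^{-1}$; then $G \geq I$ is equivalent to $G^{-1} = A \# B^{-1} \leq I$.

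The core of the argument consists of four identifications of geometric means in terms of $G$, each of which I would carry out by invoking the Riccati characterization $A^{-1} \# X = $ the unique $Y \in \mathbb{P}$ solving $Y A Y = X$. Starting from the factorization $A \natural_t B = G^{t} A G^{t}$ recorded in \eqref{E:spectral-geomean}, one reads off that $Y = G^{t}$ solves $Y A Y = A \natural_t B$, and that $Y = G^{1-t}$ solves $Y (A \natural_t B) Y = B$ (the latter because $G^{1-t} G^{t} A G^{t} G^{1-t} = G A G = B$ is precisely the Riccati equation defining $G$). This produces
\begin{equation*}
A^{-1} \# (A \natural_t B) = G^{t}, \qquad (A \natural_t B)^{-1} \# B = G^{1-t}.
\end{equation*}
For the Wasserstein mean, the factorization $A \diamond_t B = (I \nabla_t G) A (I \nabla_t G)$ from \eqref{E:Wass-expression}, together with the observation that $I \nabla_t G = (1-t) I + t G$ commutes with $G$, analogously yields
\begin{equation*}
A^{-1} \# (A \diamond_t B) = I \nabla_t G, \qquad (A \diamond_t B)^{-1} \# B = G (I \nabla_t G)^{-1}.
\end{equation*}

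With these four identities in hand, conditions (4)--(7) translate respectively to $G^{t} \geq I$, $G^{1-t} \geq I$, $(1-t) I + t G \geq I$, and $G (I \nabla_t G)^{-1} \geq I$. The restrictions $t \in (0,1]$ and $t \in [0,1)$ are exactly what is needed so that on the spectrum of $G$ each of these inequalities collapses to $\lambda \geq 1$, i.e., to $G \geq I$, which is (2). The main subtlety is the Wasserstein case (7): one must verify that $G (I \nabla_t G)^{-1}$ is genuinely self-adjoint and positive in order for it to qualify as a geometric mean, and this is exactly where the commutation of $G$ with $I \nabla_t G$ is indispensable. Everything else is routine spectral calculus once the four Riccati-based identifications have been made.
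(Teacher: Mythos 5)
Your argument is correct, and every step checks out: the Riccati characterization of the geometric mean ($A^{-1}\# X$ is the unique positive definite $Y$ with $YAY=X$) does identify $A^{-1}\#(A\natural_t B)=G^t$, $(A\natural_t B)^{-1}\# B=G^{1-t}$, $A^{-1}\#(A\diamond_t B)=I\nabla_t G$, and $(A\diamond_t B)^{-1}\# B=G(I\nabla_t G)^{-1}$, after which each of (4)--(7) collapses by spectral calculus to $G\geq I$; you correctly flag that the commutation of $G$ with $I\nabla_t G$ is what makes the last identification legitimate. Note, however, that the paper does not prove this theorem at all --- it is quoted from the references \cite{DF24, GH} --- so there is no in-text proof to compare against; your write-up is a self-contained proof of a cited result. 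It is worth remarking that your method is the streamlined version of what the paper actually does in its proof of Theorem \ref{T:mono-sp+Wass} ((iii) $\Rightarrow$ (i)), where the same reduction to the spectrum of $C=A^{-1}\# B$ is carried out by hand via successive congruence transformations and the identity $I\#X^2=X$; your direct Riccati identifications buy a uniform one-line treatment of all four conditions (4)--(7) at the price of having to verify self-adjointness of $G(I\nabla_t G)^{-1}$, which you do.
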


For any $s, t \in \mathbb{R}$ such that $s \leq t$, if $A \leq B$ then $A^{-1/2} B A^{-1/2} \geq I$ so
\begin{displaymath}
A \#_{s} B = A^{1/2} (A^{-1/2} B A^{-1/2})^{s} A^{1/2} \leq A^{1/2} (A^{-1/2} B A^{-1/2})^{t} A^{1/2} = A \#_{t} B.
\end{displaymath}
That is, the geodesic curve $A \#_{t} B$ is monotone increasing on $t \in \mathbb{R}$ in terms of the Loewner order when $A \leq B$.

In \cite[Theorem 3.6]{GH}, characterizations of the monotonicity for the spectral geometric and Wasserstein means on parameters with respect to the near-order have been shown on $\mathbb{P}_{m}$ by applying Lemma \ref{L:commute}. These can be extended to the infinite-dimensional setting $\mathbb{P}$ and $s, t \in \mathbb{R}$.

\begin{lemma} \cite{GH} \label{L:commute}
Let $X, Y \in \mathbb{P}$ such that $X, Y$ commute. Then
$X \leq Y$ if and only if $X A X \preceq Y A Y$ for any $A \in \mathbb{P}$.
\end{lemma}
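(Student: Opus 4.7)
The plan is to compute $(XAX)^{-1} \# (YAY)$ in closed form under the commutativity hypothesis, which reduces the lemma to an elementary scalar-type inequality.

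By the Riccati characterization of the geometric mean recalled in Section 2, $(XAX)^{-1} \# (YAY)$ is the unique $W \in \mathbb{P}$ satisfying $W(XAX)W = YAY$. I would test the ansatz $W := YX^{-1}$. Since $X, Y$ commute and are positive definite, they admit a joint spectral resolution, so $YX^{-1}$ is self-adjoint with strictly positive spectrum and hence belongs to $\mathbb{P}$. Plugging it in and using $XYX^{-1} = Y$ (a consequence of $XY = YX$),
\[
(YX^{-1})(XAX)(YX^{-1}) \;=\; YA\,(XYX^{-1}) \;=\; YAY,
\]
so by uniqueness $(XAX)^{-1} \# (YAY) = YX^{-1}$. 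Notably, this expression is independent of $A \in \mathbb{P}$.

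It then remains to check that $YX^{-1} \geq I$ is equivalent to $X \leq Y$ under commutativity. Conjugating $YX^{-1} \geq I$ by $X^{1/2}$ gives $X^{1/2} Y X^{-1/2} \geq X$, and since $X^{1/2}$ commutes with $Y$ the left-hand side collapses to $Y$; the converse follows by the symmetric conjugation with $X^{-1/2}$. Chaining everything together,
\[
X \leq Y \ \Longleftrightarrow \ YX^{-1} \geq I \ \Longleftrightarrow \ (XAX)^{-1} \# (YAY) \geq I \ \Longleftrightarrow \ XAX \preceq YAY,
\]
which is the statement of the lemma.

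I do not foresee a serious obstacle. The whole argument rests on spotting $W = YX^{-1}$ as a solution of the Riccati equation, and the commutativity hypothesis is used precisely where needed: to ensure $YX^{-1} \in \mathbb{P}$ in the first place, and to carry out the cancellation $XYX^{-1} = Y$ in the verification. As a byproduct one sees that ``for any $A \in \mathbb{P}$'' in the statement could equivalently be weakened to ``for some $A \in \mathbb{P}$'', since the quantity $(XAX)^{-1}\#(YAY)$ does not in fact depend on $A$.
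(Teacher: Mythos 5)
Your proof is correct. Note that the paper does not prove this lemma at all --- it is quoted from \cite{GH} --- so there is no in-text argument to compare against; your Riccati-equation derivation is self-contained and works in the operator setting. The key identities all check out: commutativity gives $YX^{-1}=X^{-1/2}YX^{-1/2}\in\mathbb{P}$, the verification $(YX^{-1})(XAX)(YX^{-1})=YAY$ is valid, and uniqueness of the positive solution of the Riccati equation yields $(XAX)^{-1}\#(YAY)=YX^{-1}$, which is $\geq I$ exactly when $X\leq Y$ (again by commutativity of $X^{\pm 1/2}$ with $Y$). Your closing observation --- that $(XAX)^{-1}\#(YAY)$ does not depend on $A$, so the quantifier ``for any $A$'' could be replaced by ``for some $A$'' --- is accurate and is arguably the cleanest way to see why the lemma is uniform in $A$.
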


\begin{theorem} \label{T:mono-sp+Wass}
Let $A, B \in \mathbb{P}$ and $s, t \in \mathbb{R}$ such that $s < t$. Then the following are equivalent:
\begin{itemize}
  \item[(i)] $A \preceq B$,
  \item[(ii)] $A \natural_{s} B \preceq A \natural_{t} B$,
  \item[(iii)] $A \diamond_{s} B \preceq A \diamond_{t} B$.
\end{itemize}
\end{theorem}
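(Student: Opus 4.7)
The approach is to introduce $C := A^{-1} \# B$ and reduce the two non-trivial equivalences to Lemma~\ref{L:commute} applied to commuting congruence factors. By definition, (i) is the inequality $C \geq I$. From \eqref{E:spectral-geomean} and \eqref{E:Wass-expression}, for every $r \in \mathbb{R}$
\[
A \natural_r B = C^r A C^r, \qquad A \diamond_r B = D_r A D_r, \qquad D_r := (1-r)I + rC,
\]
and in each case the factors at parameters $s$ and $t$ commute, being continuous functions of the single self-adjoint operator $C$.

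For (i) $\Leftrightarrow$ (ii), the operators $C^s, C^t$ always lie in $\mathbb{P}$, so Lemma~\ref{L:commute} identifies $A \natural_s B \preceq A \natural_t B$ with $C^s \leq C^t$. Multiplying the latter by the commuting positive operator $C^{-s}$ reduces it to $C^{t-s} \geq I$, and since $t-s>0$ the functional calculus applied to the strictly increasing map $\lambda \mapsto \lambda^{t-s}$ on $(0,\infty)$ yields $C \geq I$, which is exactly (i).

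For (i) $\Leftrightarrow$ (iii), the same template with $D_s, D_t$ gives $A \diamond_s B \preceq A \diamond_t B \Leftrightarrow D_s \leq D_t$, and the direct computation $D_t - D_s = (t-s)(C - I)$ combined with $t-s>0$ makes this last comparison equivalent to $C \geq I$. The main obstacle is verifying the positivity hypothesis of Lemma~\ref{L:commute} throughout the full range $s, t \in \mathbb{R}$: for $r<0$ with $|r|$ larger than $1/(\lambda_{\max}(C)-1)$, the operator $D_r$ is invertible Hermitian but not positive definite, even though $A \diamond_r B$ stays in $\mathbb{P}$ as a congruence of $A$ by the invertible $D_r$. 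Positivity is automatic whenever $s, t \geq 0$ and $C \geq I$ (so $D_r \geq I$); outside that sub-range the argument has to be reworked, for instance by exploiting the equivalent characterization $A \preceq B \Leftrightarrow A \# B^{-1} \leq I$ from the preceding theorem together with the congruence invariance $(MXM^*) \# (MYM^*) = M(X \# Y)M^*$, which holds for arbitrary invertible (not necessarily positive) $M$, or by first proving the equivalence on sub-intervals of parameters where $D_r \in \mathbb{P}$ and chaining the local equivalences via near-transitivity of $\preceq$.
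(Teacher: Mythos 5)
Your reduction of (i) $\Leftrightarrow$ (ii) to Lemma \ref{L:commute} is complete and correct: $C^{s}$ and $C^{t}$ are always positive definite and commute, so the lemma converts (ii) into $C^{s}\leq C^{t}$, and since all operators involved are functions of $C$ this is equivalent to $C^{t-s}\geq I$ and hence to $C\geq I$. This is also the paper's route for that part (it simply cites \cite[Theorem 3.6]{GH} for it).

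The genuine gap is in (i) $\Leftrightarrow$ (iii) for arbitrary real $s<t$, which is exactly the portion the paper identifies as the new content of the theorem. You correctly diagnose the obstruction --- $D_{r}=I\nabla_{r}C$ need not be positive definite outside a bounded parameter range, so Lemma \ref{L:commute} is not applicable --- but you then stop at ``the argument has to be reworked, for instance by\dots'' without executing either repair. Neither sketch is a proof as it stands: the chaining idea is problematic because $\preceq$ only satisfies a modified near-transitivity, and the idea of exploiting congruence invariance of $\#$ under arbitrary invertible (non-positive) congruences is left entirely undeveloped. The paper's actual argument for (iii) $\Rightarrow$ (i) is a worked-out version of that second suggestion: from the hypothesis it writes
\begin{displaymath}
I \leq \left[(I\nabla_{s}C)^{-1}A^{-1}(I\nabla_{s}C)^{-1}\right]\#\left[(I\nabla_{t}C)A(I\nabla_{t}C)\right],
\end{displaymath}
applies congruence by the invertible Hermitian $I\nabla_{s}C$ and then by $A^{1/2}$ (these steps need invertibility, not positivity, of the congruence factor), uses the commutativity of $I\nabla_{s}C$ and $I\nabla_{t}C$ to collapse $I\#(\,\cdot\,)^{2}$, and arrives at $(I\nabla_{s}C)^{2}\leq(I\nabla_{s}C)(I\nabla_{t}C)$, whence $I\nabla_{s}C\leq I\nabla_{t}C$ and $C\geq I$. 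To complete your proof you would need to carry out this computation (or an equivalent one) rather than defer it; as written, the equivalence with (iii) is only established on the sub-range of parameters where $D_{s},D_{t}\in\mathbb{P}$.
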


\begin{proof}
The equivalences among (i), (ii) and (iii) have been proved in \cite[Theorem 3.6]{GH}, except (i) $\Leftrightarrow$ (iii) for $s, t \in \mathbb{R}$ such that $s < t$.

We prove that (iii) implies (i). When $s = 0$ and $t = 1$, the conclusion is obviously true. So we need to prove it when $s \neq 0$ and $t \neq 1$.

Set $C = A^{-1} \# B$. By assumption (iii), $(I \nabla_{s} C) A (I \nabla_{s} C) \preceq (I \nabla_{t} C) A (I \nabla_{t} C)$ for certain $s, t \in \mathbb{R}$ such that $s < t$ so
\begin{displaymath}
I \leq \left[ (I \nabla_{s} C)^{-1} A^{-1} (I \nabla_{s} C)^{-1} \right] \# \left[ (I \nabla_{t} C) A (I \nabla_{t} C) \right].
\end{displaymath}
Taking congruence transformations by $I \nabla_{s} C$ and $A^{1/2}$ consecutively yield
\begin{displaymath}
\begin{split}
A^{1/2} (I \nabla_{s} C)^{2} A^{1/2} & \leq I \# A^{1/2} (I \nabla_{s} C) (I \nabla_{t} C) A (I \nabla_{t} C) (I \nabla_{s} C) A^{1/2} \\
& = I \# (A^{1/2} (I \nabla_{s} C) (I \nabla_{t} C) A^{1/2})^{2} = A^{1/2} (I \nabla_{s} C) (I \nabla_{t} C) A^{1/2}.
\end{split}
\end{displaymath}
The first equality follows from the fact that $I \nabla_{s} C$ and $I \nabla_{t} C$ commute. Taking congruence transformation by $A^{-1/2}$ we obtain $(I \nabla_{s} C)^{2} \leq (I \nabla_{s} C) (I \nabla_{t} C)$, and hence, $I \nabla_{s} C \leq I \nabla_{t} C$. This is equivalent to $C \geq I$ because $s < t$, which means $A \preceq B$.
\end{proof}

The following shows the monotonicity of spectral geometric and Wasserstein means on variables with respect to the near-order.
\begin{proposition}
Let $A, B, C \in \mathbb{P}$ such that $(A^{1/2} B A^{1/2})^{1/2} A^{-1} (A^{1/2} C A^{1/2})^{1/2}$ is self-adjoint. Then $B \leq C$ implies
\begin{itemize}
  \item[(1)] $A \natural_{t} B \preceq A \natural_{t} C$ for any $t \in (0,1]$,
  \item[(2)] $A \natural_{t} B \succeq A \natural_{t} C$ for any $t \in [-1,0)$, and
  \item[(3)] $A \diamond_{t} B \preceq A \diamond_{t} C$ for any $t \in (0,1]$.
\end{itemize}
\end{proposition}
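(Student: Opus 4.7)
The plan is to reduce all three claims to Lemma \ref{L:commute} by reinterpreting the self-adjointness hypothesis as a commutativity statement. I set $P := A^{-1} \# B$ and $Q := A^{-1} \# C$ and record the identities $(A^{1/2} B A^{1/2})^{1/2} = A^{1/2} P A^{1/2}$ and $(A^{1/2} C A^{1/2})^{1/2} = A^{1/2} Q A^{1/2}$, which follow directly from the definition of the metric geometric mean. Substituting these into the hypothesis gives
\[
(A^{1/2} B A^{1/2})^{1/2} A^{-1} (A^{1/2} C A^{1/2})^{1/2} = A^{1/2} (PQ) A^{1/2},
\]
so the assumption that this expression is self-adjoint is equivalent to $PQ = QP$. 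In parallel, monotonicity of the Kubo-Ando geometric mean in its second variable applied to $B \le C$ yields $P \le Q$.

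With $P, Q$ commuting positive definite operators satisfying $P \le Q$, I would use simultaneous diagonalization (equivalently, the joint continuous functional calculus of a commuting pair of self-adjoint operators) to extract the order behavior needed at each power: for $t \ge 0$, $P^{t} \le Q^{t}$ with $P^{t}$ and $Q^{t}$ still commuting; for $t \le 0$, $P^{t} \ge Q^{t}$, again commuting; and for $t \in [0,1]$, $I \nabla_{t} P \le I \nabla_{t} Q$ with $I \nabla_{t} P$ and $I \nabla_{t} Q$ commuting, since both are polynomials in the commuting pair $\{P, Q\}$.

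Now each of (1), (2), (3) follows by a single application of Lemma \ref{L:commute}. For (1), taking $X = P^{t}$, $Y = Q^{t}$ with $t \in (0,1]$ gives $P^{t} A P^{t} \preceq Q^{t} A Q^{t}$, which is exactly $A \natural_{t} B \preceq A \natural_{t} C$. For (2), with $t \in [-1,0)$ the inequality reverses to $Q^{t} \le P^{t}$, and the same lemma produces $A \natural_{t} C \preceq A \natural_{t} B$. For (3), the expression \eqref{E:Wass-expression} identifies $A \diamond_{t} B$ and $A \diamond_{t} C$ with the congruence transformations of $A$ by $I \nabla_{t} P$ and $I \nabla_{t} Q$, so Lemma \ref{L:commute} once more yields $A \diamond_{t} B \preceq A \diamond_{t} C$.

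The only step requiring genuine thought is the first: recognizing that the somewhat opaque self-adjointness condition in the hypothesis is precisely the commutativity of $A^{-1} \# B$ with $A^{-1} \# C$. Once that translation is made, the remainder is an almost mechanical combination of simultaneous diagonalization with Lemma \ref{L:commute}, and no convexity, Thompson-metric, or limit arguments are needed.
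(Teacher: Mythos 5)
Your proposal is correct and follows essentially the same route as the paper: set $P=A^{-1}\#B$, $Q=A^{-1}\#C$, observe that the self-adjointness hypothesis is exactly $PQ=QP$ while monotonicity of $\#$ gives $P\leq Q$, and then feed $P^{t},Q^{t}$ (resp.\ $I\nabla_{t}P$, $I\nabla_{t}Q$) into Lemma \ref{L:commute}. Your only departure is cosmetic — you justify the powers via joint functional calculus where the paper invokes the Loewner--Heinz inequality, and you spell out the translation of the hypothesis into commutativity more explicitly than the paper does.
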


\begin{proof}
Set $X = A^{-1} \# B$ and $Y = A^{-1} \# C$. Then $X \leq Y$ by the monotonicity of metric geometric mean with $B \leq C$, and $X^{t} \leq Y^{t}$ for $0 < t \leq 1$ by the Loewner-Heinz inequality. By assumption,
\begin{displaymath}
(A^{1/2} B A^{1/2})^{1/2} A^{-1} (A^{1/2} C A^{1/2})^{1/2} = (A^{1/2} C A^{1/2})^{1/2} A^{-1} (A^{1/2} B A^{1/2})^{1/2},
\end{displaymath}
and $XY = YX$. By Lemma \ref{L:commute}
\begin{displaymath}
A \natural_{t} B = X^{t} A X^{t} \preceq Y^{t} A Y^{t} = A \natural_{t} C.
\end{displaymath}
Similarly, we can obtain (2) for $t = -s$ where $s \in (0,1]$. Moreover, $X \leq Y$ yields $I \nabla_{t} X \leq I \nabla_{t} Y$ for $t \in (0,1]$, and $XY = YX$ yields that $I \nabla_{t} X, I \nabla_{t} Y$ commute. By Lemma \ref{L:commute}
\begin{displaymath}
A \diamond_{t} B = [I \nabla_{t} X] A [I \nabla_{t} X] \preceq [I \nabla_{t} Y] A [I \nabla_{t} Y] = A \diamond_{t} C.
\end{displaymath}
\end{proof}

Motivated from \cite[Theorem 4.4]{GK24}, Gan and Huang \cite{GH} have shown the near-order relation between the spectral geometric mean and Wasserstein mean of positive definite matrices. It can be extended to the infinite-dimensional setting $\mathbb{P}$:
\begin{equation} \label{E:near-sp+Wass}
A \natural_{t} B \preceq A \diamond_{t} B
\end{equation}
for $A, B \in \mathbb{P}$ and $t \in [0,1]$. Since the near-order is not invariant under congruence transformation, the following is new.

\begin{corollary}
Let $A, B \in \mathbb{P}$ and $t \in [0,1]$. Then
\begin{itemize}
  \item[(1)] $A \leq B$ implies $A^{-1} \diamond B \geq I$, and
  \item[(2)] $A^{1/2} (A \natural_{t} B) A^{1/2} \preceq A^{1/2} (A \diamond_{t} B) A^{1/2}$.
\end{itemize}
\end{corollary}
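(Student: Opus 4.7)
For part (1), the plan is to rewrite the Wasserstein mean by applying \eqref{E:Wass-expression} with $A$ replaced by $A^{-1}$, yielding the congruence formula
\[
A^{-1} \diamond B = M A^{-1} M, \qquad M := \tfrac{1}{2}(I + A \# B).
\]
Since $M$ is positive definite, conjugating by $M^{-1}$ on both sides shows that $M A^{-1} M \geq I$ is equivalent to $A^{-1} \geq M^{-2}$, hence (by order-reversal under inversion) to $M^{2} \geq A$; that is, $(I + A \# B)^{2} \geq 4A$. The elementary operator identity $(I + X)^{2} - 4X = (I - X)^{2} \geq 0$, applied to the self-adjoint $X = A \# B$, gives $(I + A \# B)^{2} \geq 4(A \# B)$, and the Loewner in-betweenness of the metric geometric mean provides $A \leq A \# B$ from the hypothesis $A \leq B$. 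Chaining these two inequalities gives $(I + A \# B)^{2} \geq 4(A \# B) \geq 4A$, which closes (1).

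For part (2), the plan rests on a factorization that turns both sides into perfect squares. Setting $T := A^{-1} \# B$, definition \eqref{E:spectral-geomean} gives $A \natural_{t} B = T^{t} A T^{t}$, and splitting the middle $A = A^{1/2} A^{1/2}$ produces
\[
A^{1/2}(A \natural_{t} B) A^{1/2} = A^{1/2} T^{t} A T^{t} A^{1/2} = (A^{1/2} T^{t} A^{1/2})^{2}.
\]
Likewise, \eqref{E:Wass-expression} gives $A \diamond_{t} B = L_{t} A L_{t}$ with $L_{t} := I \nabla_{t} T$, whence $A^{1/2}(A \diamond_{t} B) A^{1/2} = (A^{1/2} L_{t} A^{1/2})^{2}$ by the same splitting. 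The weighted arithmetic--geometric mean inequality for Kubo--Ando means yields $T^{t} = I \#_{t} T \leq I \nabla_{t} T = L_{t}$; conjugating by $A^{1/2}$ delivers the Loewner inequality $A^{1/2} T^{t} A^{1/2} \leq A^{1/2} L_{t} A^{1/2}$, and \eqref{E:log} upgrades this to the near-order $A^{1/2} T^{t} A^{1/2} \preceq A^{1/2} L_{t} A^{1/2}$. A final invocation of Proposition \ref{P:mono-p>1}(i) with $p = 2$ lifts this to the squares, giving the claim.

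The key subtlety in (2) is that the near-order inequality $A \natural_{t} B \preceq A \diamond_{t} B$ from \eqref{E:near-sp+Wass} cannot be used directly, because $\preceq$ is not invariant under congruence transformations (as the authors emphasize just before the corollary). The pivotal step is spotting the factorization $A^{1/2} T^{t} A T^{t} A^{1/2} = (A^{1/2} T^{t} A^{1/2})^{2}$, which reroutes the problem through a plain Loewner inequality on the square roots---where AM--GM applies---and then uses the squaring property of $\preceq$ (Proposition \ref{P:mono-p>1}) to restore the outer exponent. The analogous insight for (1) is to recast $A^{-1} \diamond B \geq I$ as the Loewner inequality $M^{2} \geq A$ on the congruence factor, which then falls to a one-line AM--GM combined with the Loewner in-betweenness of $A \# B$.
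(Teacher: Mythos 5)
Your proof of part (2) is correct and is essentially the paper's own argument: the same factorization $A^{1/2}(A\natural_t B)A^{1/2} = \left[A^{1/2}(A^{-1}\# B)^t A^{1/2}\right]^2$ and $A^{1/2}(A\diamond_t B)A^{1/2} = \left[A^{1/2}(I\nabla_t (A^{-1}\# B))A^{1/2}\right]^2$, the AM--GM inequality $(A^{-1}\# B)^t \leq I\nabla_t(A^{-1}\# B)$, passage from $\leq$ to $\preceq$, and Proposition \ref{P:mono-p>1}(i) with $p=2$. For part (1), however, you take a genuinely different and more self-contained route. The paper invokes the characterization $A \leq B \Leftrightarrow A^{-1}\natural B \geq I$ from \cite[Remark 3.5]{HK22}, combines it with the near-order inequality \eqref{E:near-sp+Wass} applied to the pair $(A^{-1},B)$, and then uses near-transitivity to conclude $A^{-1}\diamond B \geq I$. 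You instead write $A^{-1}\diamond B = M A^{-1} M$ with $M = \tfrac12(I + A\# B)$ via \eqref{E:Wass-expression}, reduce the claim to the Loewner inequality $M^2 \geq A$, and obtain it from the elementary identity $(I+X)^2 - 4X = (I-X)^2 \geq 0$ for $X = A\# B$ together with $A \leq A\# B$ (monotonicity of $\#$ under $A\leq B$). All steps check out: conjugation by $M^{-1}$ and inversion correctly convert $MA^{-1}M\geq I$ into $M^2\geq A$. Your argument buys independence from the external references (the HK22 characterization, the Gan--Huang inequality \eqref{E:near-sp+Wass}, and the near-transitivity lemma of \cite{DF24}), at the cost of being special to $t=1/2$; the paper's route is shorter given those ingredients and fits the surrounding narrative about the $\natural$--$\diamond$ comparison, but is less elementary.
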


\begin{proof}
Note from \cite[Remark 3.5]{HK22} that $A \leq B$ is equivalent to $A^{-1} \natural B \geq I$. By \eqref{E:near-sp+Wass} and applying the near-transitivity in \cite{DF24}, we obtain (1).

Since $(A^{-1} \# B)^{t} \leq I \nabla_{t} (A^{-1} \# B)$,
\begin{displaymath}
A^{1/2} (A^{-1} \# B)^{t} A^{1/2} \leq A^{1/2} [I \nabla_{t} (A^{-1} \# B)] A^{1/2},
\end{displaymath}
and $A^{1/2} (A^{-1} \# B)^{t} A^{1/2} \preceq A^{1/2} [I \nabla_{t} (A^{-1} \# B)] A^{1/2}$. By Proposition \ref{P:mono-p>1} (i)
\begin{displaymath}
\left[ A^{1/2} (A^{-1} \# B)^{t} A^{1/2} \right]^{2} \preceq \left[ A^{1/2} [I \nabla_{t} (A^{-1} \# B)] A^{1/2} \right]^{2},
\end{displaymath}
which completes the proof of (2).
\end{proof}

The operator fidelity of $A, B \in \mathbb{P}$ is given by
\begin{displaymath}
F(A, B) = (A^{1/2} B A^{1/2})^{1/2}.
\end{displaymath}
Indeed, $\tr F(A, B)$ for density matrices $A, B$ is known as the quantum fidelity.
In general, $F(A, B) \neq F(B, A)$ although $\tr F(A, B) = \tr F(B, A)$.

\begin{proposition}
Let $A, B \in \mathbb{P}$. Then for all $n \in \mathbb{N}$
\begin{itemize}
  \item[(1)] $A^{1/2} \preceq F(A, B)$ implies $A^{2^{n-1}} \preceq F(A^{2^{n}}, B)$, and
  \item[(2)] $F(B, A) \preceq B^{1/2}$ implies $F(B^{2^{n}}, A) \preceq B^{2^{n-1}}$.
\end{itemize}
\end{proposition}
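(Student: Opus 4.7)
The plan is to prove (1) by induction on $n \geq 1$ and then deduce (2) from (1) by an inversion argument. The key auxiliary fact is the reformulation
\begin{displaymath}
X \preceq Y \quad \text{if and only if} \quad F(X, Y) \geq X,
\end{displaymath}
which follows from the computation $X^{-1} \# Y = X^{-1/2} F(X, Y) X^{-1/2}$ combined with a congruence by $X^{1/2}$. This lets us shuttle between near-order statements and ordinary Loewner inequalities involving the fidelity.

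For (1), I would induct on $n$, starting from the given $A^{1/2} \preceq F(A, B) = (A^{1/2} B A^{1/2})^{1/2}$. At the $n$-th step, assume $A^{2^{n-1}} \preceq F(A^{2^n}, B)$; since $F(A^{2^n}, B)^2 = A^{2^{n-1}} B A^{2^{n-1}}$, Proposition \ref{P:mono-p>1}(i) with $p = 2$ upgrades this to $A^{2^n} \preceq A^{2^{n-1}} B A^{2^{n-1}}$. Feeding the latter into the reformulation gives $F(A^{2^n}, A^{2^{n-1}} B A^{2^{n-1}}) \geq A^{2^n}$ in the Loewner order, and the left-hand side collapses to
\begin{displaymath}
((A^{2^n})^{1/2} A^{2^{n-1}} B A^{2^{n-1}} (A^{2^n})^{1/2})^{1/2} = (A^{2^n} B A^{2^n})^{1/2} = F(A^{2^{n+1}}, B).
\end{displaymath}
Thus $F(A^{2^{n+1}}, B) \geq A^{2^n}$, and the implication \eqref{E:log} upgrades this to the near-order statement $A^{2^n} \preceq F(A^{2^{n+1}}, B)$, closing the induction.

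For (2), I would exploit two dualities: Proposition \ref{P:mono-p>1}(ii) with $p = -1$, applied in both directions, gives $X \preceq Y$ if and only if $Y^{-1} \preceq X^{-1}$, and a one-line computation using $(X^{1/2} Y X^{1/2})^{-1} = X^{-1/2} Y^{-1} X^{-1/2}$ gives $F(X, Y)^{-1} = F(X^{-1}, Y^{-1})$. Combining these, the hypothesis $F(B, A) \preceq B^{1/2}$ rewrites as $(B^{-1})^{1/2} \preceq F(B^{-1}, A^{-1})$, which is precisely the hypothesis of (1) under the substitution $A \mapsto B^{-1}$, $B \mapsto A^{-1}$. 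Applying (1) and then undoing the substitution via the same two dualities yields $F(B^{2^n}, A) \preceq B^{2^{n-1}}$.

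The only non-routine move is spotting the equivalence $X \preceq Y \Leftrightarrow F(X, Y) \geq X$; once that bridge is in place, each inductive step in (1) is a mechanical doubling of the exponent via Proposition \ref{P:mono-p>1}(i) followed by the implication $\leq \Rightarrow \preceq$, and (2) drops out of (1) by bookkeeping with the inversion identities.
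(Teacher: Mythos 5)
Your proof is correct and follows essentially the same route as the paper's: the equivalence $X \preceq Y \Leftrightarrow F(X,Y) \geq X$ is exactly the paper's unwinding of the near-order definition via $X^{-1}\# Y = X^{-1/2}F(X,Y)X^{-1/2}$, and each doubling step is the paper's $n=1$ argument (square via Proposition \ref{P:mono-p>1}(i), collapse the fidelity, then pass from $\leq$ back to $\preceq$), iterated by induction just as the paper does with the substitution $C = A^{2^{k}}$. Your explicit derivation of (2) from (1) via $F(X,Y)^{-1}=F(X^{-1},Y^{-1})$ and Proposition \ref{P:mono-p>1}(ii) is a clean way to supply the step the paper omits as a ``similar argument.''
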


\begin{proof}
By Proposition \ref{P:mono-p>1} (i), the assumption of (1) implies $A \preceq A^{1/2} B A^{1/2}$, and by definition of the near-order
\begin{displaymath}
A^{-1} \# (A^{1/2} B A^{1/2}) \geq I.
\end{displaymath}
This is equivalent to $(A B A)^{1/2} \geq A$, and hence, $A \preceq (A B A)^{1/2} = F(A^{2}, B)$. So (1) holds for $n = 1$.

Assume that $A^{2^{k-1}} \preceq F(A^{2^{k}}, B)$ for $k \in \mathbb{N}$.
Replacing $A^{2^{k}}$ by $C$, the assumption can be written as $C^{1/2} \preceq F(C, B)$.
By the preceding argument for $n = 1$ we get $C \preceq F(C^{2}, B)$, that is, $A^{2^{k}} \preceq F(A^{2^{k+1}}, B)$. By induction on $n$, we proved (1).
\end{proof}

\begin{remark}
We recall the \emph{entrywise eigenvalue relation} $\leq_{\lambda}$ on $\mathbb{H}_{m}$.
Let $\lambda_{1}(A) \geq \lambda_{2}(A) \geq \cdots \geq \lambda_{m}(A)$, where $\lambda_{i}(A)$'s are real eigenvalues of $A \in \mathbb{H}_{m}$. For $A, B \in \mathbb{H}_{m}$ we write as
\begin{equation} \label{E:entrywise-eigenvalue}
A \leq_{\lambda} B \quad \text{if and only if} \quad \lambda_{i}(A) \leq \lambda_{i}(B) \ \textrm{for all} \ 1 \leq i \leq m.
\end{equation}
This relation $\leq_{\lambda}$ is reflexive and transitive, but not antisymmetric. We write as $A =_{\lambda} B$ if and only if $\lambda_{i}(A) = \lambda_{i}(B)$ for all $i$.
Note from \cite{GH} that for $A, B \in \mathbb{P}_{m}$
\begin{equation} \label{E:relations}
A \leq B \ \Longrightarrow \ A \preceq B \ \Longrightarrow \ A \leq_{\lambda} B \ \Longrightarrow \ A \prec_{w \log} B,
\end{equation}
where $A \prec_{w \log} B$ denotes that $A$ is weakly log-majorized by $B$. By definition of the weak log-majorization, one can easily see that $A \prec_{w \log} B$ and $B \prec_{w \log} A$ imply $A =_{\lambda} B$.
\begin{itemize}
\item[(1)] For $A, B \in \mathbb{H}_{m}$, $A =_{\lambda} B$ if and only if $A, B$ are unitarily similar. Indeed, since $A, B \in \mathbb{H}_{m}$, they are unitarily diagonalizable, that is,
\begin{center}
$A = U_{A} D_{A} U_{A}^{*} \quad$ and $\quad B = U_{B} D_{B} U_{B}^{*}$,
\end{center}
where $U_{A}, U_{B}$ are unitary matrices and $D_{A}, D_{B}$ are diagonal matrices whose main diagonal entries are real eigenvalues of $A, B$ respectively. If $A =_{\lambda} B$ then there exists a permutation matrix $P$ such that $D_{B} = P D_{A} P^{*}$. Thus,
\begin{displaymath}
B = U_{B} P D_{A} P^{*} U_{B}^{*} = U_{B} P (U_{A}^{*} A U_{A}) P^{*} U_{B}^{*} = V A V^{*},
\end{displaymath}
where $V := U_{B} P U_{A}^{*}$ is a unitary matrix.
The converse is true from \cite[Corollary 1.3.4]{HJ}.
\item[(2)] It is natural to ask what kinds of $A, B \in \mathbb{P}_{m}$ yield that the reverse implications of \eqref{E:relations} hold.
For instance, assuming for $A, B \in \mathbb{P}_{m}$ that $A \preceq B$ implies $A \leq B$, what can we say about $A$ and $B$?
The following are sufficient conditions to satisfy the reverse implications of \eqref{E:relations}:
\begin{itemize}
  \item[(a)] $A \leq I \leq B$,
  \item[(b)] $A B = B A$.
\end{itemize}
\end{itemize}
\end{remark}

\section{Monotonicity of quasi-arithmetic mean with respect to the near-order}

Let $\mathbb{A} = (A_{1}, \dots, A_{n}) \in \mathbb{P}^{n}$, and $\omega = (w_{1}, \dots, w_{n}) \in \Delta_{n}$, the simplex of all positive probability vectors in $\mathbb{R}^{n}$. The \emph{quasi-arithmetic mean} of order $p \in \mathbb{R} \backslash \{ 0 \}$ is defined by
\begin{equation*}
Q_{p}(\omega; \mathbb{A}) = \left(\sum^{n}_{j=1} w_{j} A_{j}^{p} \right)^{1/p}.
\end{equation*}
It is also called the Hölder mean or Pythagorean mean.
Note that for $p=1$ and $p=-1$
\begin{equation*}
Q_{1}(\omega; \mathbb{A}) = \sum^{n}_{j=1} w_{j} A_{j} =: \mathcal{A}(\omega; \mathbb{A}) \quad \text{and} \quad Q_{-1}(\omega; \mathbb{A}) = \left(\sum^{n}_{j=1} w_{j} A_{j}^{-1} \right)^{-1} =: \mathcal{H}(\omega; \mathbb{A}),
\end{equation*}
which are the arithmetic and harmonic means, respectively.
Moreover, the quasi-arithmetic mean converges to the log-Euclidean mean as $p$ goes to $0$:
\begin{equation} \label{E:guasi=LE}
\lim_{p \rightarrow 0} Q_{p}(\omega; \mathbb{A}) = \exp \left[ \sum^{n}_{j=1} w_{j} \log A_{j} \right] =: \mathrm{LE}(\omega; \mathbb{A}).
\end{equation}
Also, the quasi-arithmetic mean satisfies the duality:
\begin{equation} \label{E:quasi-dual}
Q_{p}(\omega; \mathbb{A}) = Q_{-p}(\omega; \mathbb{A}^{-1})^{-1}.
\end{equation}

For $p \geq 1,$ the quasi-arithmetic satisfies the following monotonicity on parameters with respect to the Loewner order:
\begin{theorem} \cite{Kim18}
Let $\mathbb{A} = (A_{1}, \dots, A_{n}) \in \mathbb{P}^{n}$ and $\omega = (w_{1}, \dots, w_{n}) \in \Delta_{n}$.
For $1 \leq s \leq t < \infty,$
\begin{equation*}
Q_{-t}(\omega;\mathbb{A}) \leq Q_{-s}(\omega;\mathbb{A}) \leq \mathcal{H}(\omega,\mathbb{A}) \leq \mathcal{A}(\omega,\mathbb{A}) \leq Q_{s}(\omega;\mathbb{A}) \leq Q_{t}(\omega;\mathbb{A}).
\end{equation*}
\end{theorem}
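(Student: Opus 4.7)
The plan is to split the chain at $\mathcal{A} = Q_1$ and $\mathcal{H} = Q_{-1}$ into three pieces---the right half $\mathcal{A} \leq Q_s \leq Q_t$, the middle link $\mathcal{H} \leq \mathcal{A}$, and the left half $Q_{-t} \leq Q_{-s} \leq \mathcal{H}$---and reduce everything to the right half. The duality \eqref{E:quasi-dual}, $Q_{-p}(\omega;\mathbb{A}) = Q_p(\omega;\mathbb{A}^{-1})^{-1}$, combined with the fact that inversion reverses the Loewner order on $\mathbb{P}$, converts the left half into the right half applied to $\mathbb{A}^{-1}$; explicitly, $Q_{-t}(\omega;\mathbb{A}) \leq Q_{-s}(\omega;\mathbb{A})$ is equivalent to $Q_s(\omega;\mathbb{A}^{-1}) \leq Q_t(\omega;\mathbb{A}^{-1})$, and similarly for $Q_{-s} \leq \mathcal{H}$. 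The middle link $\mathcal{H} \leq \mathcal{A}$ is the operator AM--HM inequality, which follows from the operator convexity of $x \mapsto x^{-1}$ on $(0,\infty)$ applied with the scalar weights $w_j$, followed by inversion.

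The core step is to prove $Q_s(\omega;\mathbb{A}) \leq Q_t(\omega;\mathbb{A})$ for $1 \leq s \leq t$. Set $r := t/s$ and first assume $r \in [1,2]$, the range in which $x \mapsto x^r$ is operator convex on $(0,\infty)$. The scalar-weighted Choi--Davis--Jensen inequality applied to $B_j := A_j^s$ yields
$$
\left(\sum_{j=1}^{n} w_j A_j^s\right)^r = \left(\sum_{j=1}^{n} w_j B_j\right)^r \leq \sum_{j=1}^{n} w_j B_j^r = \sum_{j=1}^{n} w_j A_j^t.
$$
Because $1/t \in (0,1]$, the Loewner--Heinz inequality lets us raise both sides to the power $1/t$ while preserving the order; since $r/t = 1/s$, the left-hand side becomes $Q_s(\omega;\mathbb{A})$ and the right-hand side becomes $Q_t(\omega;\mathbb{A})$, giving the desired inequality in the case $t/s \leq 2$.

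The main obstacle is that the operator convexity of $x^r$ fails for $r > 2$, so the direct argument does not cover every ratio. The remedy is a telescoping chain: choose $s = t_0 < t_1 < \cdots < t_N = t$ with $t_{k+1}/t_k \leq 2$ at each step (for example, $t_k = 2^k s$ for $k < N$, with the final step trimmed to land on $t$), apply the previous paragraph to each consecutive pair to obtain $Q_{t_k} \leq Q_{t_{k+1}}$, and chain these inequalities by transitivity of the Loewner order to conclude $Q_s \leq Q_t$ in full generality. Specializing $s = 1$ gives $\mathcal{A} \leq Q_t$, so combined with the reductions of the first paragraph the entire chain is established.
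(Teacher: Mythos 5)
Your proof is correct, but note that the paper does not prove this theorem at all---it is quoted with a citation to \cite{Kim18}---so the comparison is with the standard argument in that source rather than with anything in the present text. Your decomposition (right half by operator convexity, left half by the duality \eqref{E:quasi-dual} plus order reversal under inversion, AM--HM in the middle) is the natural one, and every step checks out: the $n$-term Jensen inequality for scalar weights is just iterated operator convexity, and Loewner--Heinz applies since $1/t\le 1$. The one place where you take a longer road than necessary is the core step $Q_s\le Q_t$. Because $x\mapsto x^{r}$ is operator convex only for $r\in[1,2]$, you are forced into the dyadic telescoping $s=t_0<t_1<\dots<t_N=t$. The standard proof avoids this entirely by running the argument in the concave direction: since $0<s/t\le 1$, the map $x\mapsto x^{s/t}$ is operator \emph{concave} on $(0,\infty)$ for the whole range of ratios, so
\begin{displaymath}
\sum_{j=1}^{n} w_j A_j^{s}=\sum_{j=1}^{n} w_j \bigl(A_j^{t}\bigr)^{s/t}\le \Bigl(\sum_{j=1}^{n} w_j A_j^{t}\Bigr)^{s/t},
\end{displaymath}
and raising both sides to the power $1/s\le 1$ via Loewner--Heinz gives $Q_s(\omega;\mathbb{A})\le Q_t(\omega;\mathbb{A})$ in a single step, with no case split on $t/s$. (This is also the device used in step (1) of the paper's proof of Theorem \ref{T:mono-Quasi} for the near-order analogue.) Your telescoping buys nothing extra here, but it is logically sound, so the proposal stands as a complete, if slightly less economical, proof.
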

Unfortunately, the quasi-arithmetic mean is not monotone for $p \in (-1, 1)$ with respect to the Loewner order.
Also, it is not monotone on variables, that is, $Q_{p}(\omega; \mathbb{A}) \nleq Q_{p}(\omega; \mathbb{B})$ if $A_{j} \leq B_{j}$ for all $j=1,\dots,n,$ where $\mathbb{A} = (A_{1}, \dots, A_{n}), \mathbb{B} = (B_{1}, \dots,B_{n}) \in \mathbb{P}^{n}$; see \cite{Kim18} for more information.

On the other hand, we show that the monotonicity of the quasi-arithmetic mean on variables with respect to the near-order.
\begin{theorem}
Let $\mathbb{A} = (A_{1}, \dots, A_{n}),  \mathbb{B} = (B_{1}, \dots, B_{n})\in \mathbb{P}^{n}$ and $\omega = (w_{1}, \dots, w_{n}) \in \Delta_{n}$.
If $A_{j} \leq B_{j}$ for all $j = 1, \dots, n$, then for $0 < p \leq 1$ or $-1 \leq p < 0$
\begin{equation} \label{E:mono-variables}
Q_{p}(\omega; \mathbb{A}) \preceq Q_{p}(\omega; \mathbb{B}).
\end{equation}
\end{theorem}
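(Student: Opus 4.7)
The plan is to reduce both cases to the near-order power inequalities in Proposition \ref{P:mono-p>1}, using the fact that the Loewner order implies the near-order via \eqref{E:log}.

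First, I would handle the case $0 < p \leq 1$. Because the power map $X \mapsto X^{p}$ is Loewner monotone on $[0,1]$, the componentwise assumption $A_{j} \leq B_{j}$ gives $A_{j}^{p} \leq B_{j}^{p}$, and taking the convex combination preserves this, so
\begin{displaymath}
X := \sum_{j=1}^{n} w_{j} A_{j}^{p} \leq \sum_{j=1}^{n} w_{j} B_{j}^{p} =: Y.
\end{displaymath}
By \eqref{E:log} this yields $X \preceq Y$, and since $1/p \geq 1$, Proposition \ref{P:mono-p>1}(i) gives $X^{1/p} \preceq Y^{1/p}$, which is exactly \eqref{E:mono-variables}.

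For the case $-1 \leq p < 0$, the cleanest route is to invoke the duality \eqref{E:quasi-dual}. Setting $q = -p \in (0,1]$, the inversion $A_{j} \leq B_{j}$ gives $B_{j}^{-1} \leq A_{j}^{-1}$, so the case already proved, applied to the tuples $\mathbb{B}^{-1}$ and $\mathbb{A}^{-1}$ with parameter $q$, yields
\begin{displaymath}
Q_{q}(\omega; \mathbb{B}^{-1}) \preceq Q_{q}(\omega; \mathbb{A}^{-1}).
\end{displaymath}
Finally, applying Proposition \ref{P:mono-p>1}(ii) with exponent $-1$ inverts both sides and reverses the near-order, so combining with \eqref{E:quasi-dual} gives $Q_{p}(\omega; \mathbb{A}) = Q_{q}(\omega; \mathbb{A}^{-1})^{-1} \preceq Q_{q}(\omega; \mathbb{B}^{-1})^{-1} = Q_{p}(\omega; \mathbb{B})$.

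I do not anticipate a serious obstacle, since every ingredient is already available: the only things used are Loewner-Heinz on $[0,1]$, the implication $\leq \Rightarrow \preceq$, the duality of $Q_{p}$, and the power-preservation of $\preceq$ for exponents $|p|\geq 1$ from Proposition \ref{P:mono-p>1}. The one mild subtlety to be careful about is to remember that the near-order is \emph{not} preserved under arbitrary positive powers — the restriction $|p|\geq 1$ in Proposition \ref{P:mono-p>1} is precisely the reason why the theorem is stated for $|p|\leq 1$ (so that the outer exponent $1/p$ has absolute value at least one), and this is why the argument does not extend to $|p|>1$ without further input.
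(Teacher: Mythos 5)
Your proof is correct and follows essentially the same route as the paper: Loewner--Heinz gives $A_j^p \leq B_j^p$, the sums compare in the Loewner order and hence in the near-order, and Proposition \ref{P:mono-p>1} handles the outer exponent $1/p$ with $|1/p| \geq 1$. The only (immaterial) difference is that for $-1 \leq p < 0$ the paper mirrors the positive-case argument directly using Proposition \ref{P:mono-p>1}(ii), whereas you route through the duality \eqref{E:quasi-dual}; both are valid.
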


\begin{proof}
We first prove \eqref{E:mono-variables} for $0 < p \leq 1$.
If $A_{j} \leq B_{j}$ for $j = 1, \dots, n$, then $A_{j}^{p} \leq B_{j}^{p}$ by the Loewner-Heinz inequality.
It implies that $\displaystyle \sum^{n}_{j=1} w_{j} A_{j}^{p} \leq \sum^{n}_{j=1} w_{j} B_{j}^{p}$.
So we obtain
\begin{equation*}
\sum^{n}_{j=1} w_{j} A_{j}^{p} \preceq \sum^{n}_{j=1} w_{j} B_{j}^{p}.
\end{equation*}
Since $\frac{1}{p} \geq 1$, applying Proposition \ref{P:mono-p>1} (i) to the above relation yields
\begin{equation*}
Q_{p}(\omega;\mathbb{A}) =  \left(\sum^{n}_{j=1} w_{j}A_{j}^{p}\right)^{1/p} \preceq  \left(\sum^{n}_{j=1} w_{j} B_{j}^{p}\right)^{1/p} = Q_{p}(\omega;\mathbb{B}).
\end{equation*}

By the similar argument with Proposition \ref{P:mono-p>1} (ii), we can prove \eqref{E:mono-variables} for $-1 \leq p < 0$.
\end{proof}

\begin{lemma} \label{L:limit}
Let $f, g: \mathbb{R} \rightarrow \mathbb{P}$ with $f(x) \preceq g(x)$ for every $x \in \mathbb{R}$.
Then
\begin{displaymath}
\lim_{x \rightarrow a} f(x) \preceq \lim_{x \rightarrow a} g(x).
\end{displaymath}
\end{lemma}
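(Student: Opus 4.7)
The plan is to translate the near-order into the Loewner inequality $f(x)^{-1} \# g(x) \geq I$, pass to the limit using continuity of inversion and of the metric geometric mean, and finally invoke closedness of the cone $\{X : X \geq I\}$.

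First, by the very definition of $\preceq$ in \eqref{E:near-order}, the hypothesis $f(x) \preceq g(x)$ for every $x \in \mathbb{R}$ is equivalent to
\begin{displaymath}
f(x)^{-1} \# g(x) \geq I \qquad \text{for every } x \in \mathbb{R}.
\end{displaymath}
Writing $F = \lim_{x \to a} f(x)$ and $G = \lim_{x \to a} g(x)$, the goal is to show $F^{-1} \# G \geq I$.

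Next I would establish continuity of the map $x \mapsto f(x)^{-1} \# g(x)$. Inversion on $\mathbb{P}$ is an isometry for the Thompson metric by Lemma \ref{L:Thompson_metric}(1), so $f(x)^{-1} \to F^{-1}$. The joint continuity of $(A, B) \mapsto A \# B$ for the Thompson metric follows from the convexity estimate \eqref{E:continuity} with $s = t = 1/2$, which implies
\begin{displaymath}
d_T\bigl(f(x)^{-1} \# g(x),\, F^{-1} \# G\bigr) \leq \tfrac{1}{2} d_T(f(x)^{-1}, F^{-1}) + \tfrac{1}{2} d_T(g(x), G) \xrightarrow[x \to a]{} 0.
\end{displaymath}
Hence $f(x)^{-1} \# g(x) \to F^{-1} \# G$ in the Thompson metric.

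Finally, I would use that the Loewner cone $\{X \in \mathbb{P} : X \geq I\}$ is closed: if $X_n \to X$ in the Thompson metric then $X_n \to X$ in operator norm (and hence weakly), so the inequalities $\langle \xi, X_n \xi \rangle \geq \|\xi\|^2$ survive in the limit. Applying this to $X_n = f(x_n)^{-1} \# g(x_n)$ along any sequence $x_n \to a$ yields $F^{-1} \# G \geq I$, i.e., $F \preceq G$. The only subtle point is ensuring all the continuity is taken in a common topology on $\mathbb{P}$; the Thompson metric handles inversion, the geometric mean, and is strong enough to control the Loewner order in the limit, so the argument goes through without further hypotheses beyond the existence of the limits in $\mathbb{P}$.
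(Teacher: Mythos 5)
Your proposal is correct and follows essentially the same route as the paper: translate the near-order to $f(x)^{-1}\# g(x)\geq I$, use Thompson-metric continuity of inversion and of $\#$ (via Lemma \ref{L:Thompson_metric}(1) and \eqref{E:continuity}), and pass the Loewner inequality to the limit. The only difference is that you make explicit the closedness of $\{X : X\geq I\}$ under Thompson-metric (hence norm) convergence, a step the paper leaves implicit.
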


\begin{proof}
By assumption, $f(x)^{-1} \# g(x) \geq I$ for every $x \in \mathbb{R}$. Since the metric geometric mean and inversion are continuous for the Thompson metric by \eqref{E:continuity} and Lemma \ref{L:Thompson_metric} (1),
\begin{equation*}
\left( \lim_{x \rightarrow a} f(x) \right)^{-1} \# \lim_{x \rightarrow a} g(x) = \lim_{x \rightarrow a} \left[ f(x)^{-1} \# g(x) \right] \geq I.
\end{equation*}
Thus, we obtain the desired relation.
\end{proof}

The log-majorization relationship among the Cartan, log-Euclidean, and Wasserstein mean has been shown \cite{BJL-2}: for $\mathbb{A} = (A_{1}, \dots, A_{n}) \in \mathbb{P}_{m}^{n}$ and $\omega = (w_{1}, \dots, w_{n}) \in \Delta_{n}$
\begin{equation} \label{E:Cartan-LE-Wass}
\Lambda(\omega; \mathbb{A}) \prec_{\log} \mathrm{LE}(\omega; \mathbb{A}) \prec_{w \log} \Omega(\omega; \mathbb{A}).
\end{equation}
Since $\Omega(\omega; \mathbb{A}) \leq \mathcal{A}(\omega; \mathbb{A})$, we obtain from \eqref{E:relations}
\begin{equation} \label{E:LE-arithmetic}
\mathrm{LE}(\omega; \mathbb{A}) \prec_{w \log} \mathcal{A}(\omega; \mathbb{A}).
\end{equation}
We do not provide only stronger relation between the log-Euclidean and arithmetic means, but also prove the monotonicity of the quasi-arithmetic mean for parameter with respect to the near-order.
\begin{theorem} \label{T:mono-Quasi}
Let $\mathbb{A} = (A_{1}, \dots, A_{n}) \in \mathbb{P}^{n}$ and $\omega = (w_{1}, \dots, w_{n}) \in \Delta_{n}$.
For $0 \leq p \leq q \leq 1,$
\begin{equation} \label{E:mono-parameters}
\mathcal{H}(\omega; \mathbb{A}) \preceq Q_{-q}(\omega; \mathbb{A}) \preceq Q_{-p}(\omega; \mathbb{A}) \preceq \mathrm{LE}(\omega; \mathbb{A}) \preceq Q_{p}(\omega; \mathbb{A}) \preceq Q_{q}(\omega; \mathbb{A}) \preceq \mathcal{A}(\omega; \mathbb{A}).
\end{equation}
\end{theorem}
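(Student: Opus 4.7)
The plan is to reduce the entire chain to the single statement
\[
Q_p(\omega;\mathbb{A}) \preceq Q_q(\omega;\mathbb{A)} \quad \text{for } 0 < p \leq q \leq 1,
\]
and then to obtain the remaining pieces by taking limits and invoking the duality \eqref{E:quasi-dual}.

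For the main step, set $s = p/q \in (0,1]$. Since $x \mapsto x^s$ is operator concave on $\mathbb{P}$, applying the Jensen-type inequality for operator concave functions to the operators $A_j^q$ yields
\[
Q_p(\omega;\mathbb{A})^p \;=\; \sum_{j=1}^{n} w_j A_j^p \;=\; \sum_{j=1}^{n} w_j (A_j^q)^s \;\leq\; \Bigl(\sum_{j=1}^{n} w_j A_j^q\Bigr)^s \;=\; Q_q(\omega;\mathbb{A})^p.
\]
By \eqref{E:log} this Loewner inequality upgrades to the near-order relation $Q_p^p \preceq Q_q^p$. Since $1/p \geq 1$, Proposition~\ref{P:mono-p>1}(i) then produces
\[
Q_p(\omega;\mathbb{A}) \;=\; (Q_p^p)^{1/p} \;\preceq\; (Q_q^p)^{1/p} \;=\; Q_q(\omega;\mathbb{A}).
\]
Specialising $q=1$ gives $Q_p \preceq \mathcal{A}$; letting $p \to 0^+$ for fixed $q \in (0,1]$ and using \eqref{E:guasi=LE} together with Lemma~\ref{L:limit} yields $\mathrm{LE} \preceq Q_q$. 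This establishes the right half $\mathrm{LE} \preceq Q_p \preceq Q_q \preceq \mathcal{A}$ of the chain.

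For the left half I would apply the right half to $\mathbb{A}^{-1}$ and invert. The key observation is that
\[
A \preceq B \iff B^{-1} \preceq A^{-1},
\]
which is immediate from the symmetry $A^{-1} \# B = B \# A^{-1}$ of the geometric mean. Combined with \eqref{E:quasi-dual} and the endpoint identifications $Q_r(\omega;\mathbb{A}^{-1})^{-1} = Q_{-r}(\omega;\mathbb{A})$, $\mathrm{LE}(\omega;\mathbb{A}^{-1})^{-1} = \mathrm{LE}(\omega;\mathbb{A})$ and $\mathcal{A}(\omega;\mathbb{A}^{-1})^{-1} = \mathcal{H}(\omega;\mathbb{A})$, inverting the right half of the chain written for $\mathbb{A}^{-1}$ flips every $\preceq$ and produces $\mathcal{H} \preceq Q_{-q} \preceq Q_{-p} \preceq \mathrm{LE}$, finishing the theorem.

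The only genuinely non-trivial step is the first one, and the trick is to resist the temptation to use operator convexity of $x^{q/p}$: that would only work when $q/p \leq 2$ and one would then be stuck, since near-order is not transitive. Replacing it by operator \emph{concavity} of $x^{p/q}$ instead gives the correct ordinary Loewner inequality already at the power $p$, which Proposition~\ref{P:mono-p>1}(i) then lifts cleanly to the near-order precisely because $1/p \geq 1$. Everything after that is bookkeeping with Lemma~\ref{L:limit} and the duality identities.
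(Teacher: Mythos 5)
Your proof is correct and follows essentially the same route as the paper: operator concavity of $x\mapsto x^{s}$ for $s\in(0,1]$, the implication from Loewner order to near-order, Proposition~\ref{P:mono-p>1} to lift the relation through the power $\ge 1$, Lemma~\ref{L:limit} with \eqref{E:guasi=LE} for the log-Euclidean endpoints, and duality \eqref{E:quasi-dual} for the negative-parameter half. The only (harmless) reorganization is that you prove $Q_{p}\preceq Q_{q}$ directly and obtain $Q_{q}\preceq\mathcal{A}$ as the case $q=1$, whereas the paper proves $Q_{q}\preceq\mathcal{A}$ first and deduces the general case by substitution; likewise you get $\mathcal{H}\preceq Q_{-q}$ by inverting the whole chain rather than by a separate application of Proposition~\ref{P:mono-p>1}(ii).
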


\begin{proof}
We prove \eqref{E:mono-parameters} with the following steps.
\begin{itemize}
  \item[(1)] We first show $Q_{q}(\omega; \mathbb{A}) \preceq \mathcal{A}(\omega; \mathbb{A})$ for $0 \leq q \leq 1$.
  Since the map $X \rightarrow X^{q}$ for $0 \leq q \leq 1$ is operator concave on $\mathbb{P}$,
  we get $\displaystyle \sum^{n}_{j=1}w_{j} A_{j}^{q} \leq \left( \sum^{n}_{j=1} w_{j} A_{j} \right)^{p}$ so $\displaystyle \sum^{n}_{j=1}w_{j} A_{j}^{q} \preceq \left( \sum^{n}_{j=1} w_{j} A_{j} \right)^{p}$.
  Using Proposition \ref{P:mono-p>1} (i), we obtain
\begin{equation*}
Q_{q}(\omega; \mathbb{A}) = \left( \sum^{n}_{j=1} w_{j} A_{j}^{q} \right)^{1/q} \preceq \sum^{n}_{j=1} w_{j} A_{j} = \mathcal{A}(\omega; \mathbb{A}).
\end{equation*}
Similarly, we get $\displaystyle \sum^{n}_{j=1} w_{j} A_{j}^{-q} \preceq \left( \sum^{n}_{j=1} w_{j} A_{j}^{-1} \right)^{q}$ by the concavity of the map $X \rightarrow X^{q}$ for $0 \leq q \leq 1$.
Applying Proposition \ref{P:mono-p>1} (ii), we get
\begin{equation*}
\mathcal{H}(\omega; \mathbb{A}) = \left( \sum^{n}_{j=1} w_{j} A^{-1}_{j} \right)^{-1} \preceq \left( \sum^{n}_{j=1}w_{j} A^{-q}_{j} \right)^{-1/q} = Q_{-q}(\omega; \mathbb{A}).
\end{equation*}

  \item[(2)] We show $Q_{p}(\omega; \mathbb{A}) \preceq Q_{q}(\omega; \mathbb{A})$ for $0 \leq p \leq q \leq 1$.
  Since $0 < \frac{p}{q} \leq 1$, the previous result (1) yields $\displaystyle \left( \sum^{n}_{j=1} w_{j} A_{j}^{p/q} \right)^{q/p} \preceq \sum^{n}_{j=1} w_{j} A_{j}$. Replacing $A_{j}$ by $A_{j}^{q}$ we have
\begin{equation*}
\left( \sum^{n}_{j=1} w_{j} A_{j}^{p} \right)^{q/p} \preceq \sum^{n}_{j=1} w_{j} A_{j}^{q}.
\end{equation*}
Since $\frac{1}{q} \geq 1$ we obtain the desired relation by Proposition \ref{P:mono-p>1} (i).

  \item[(3)] Now we prove $Q_{-q}(\omega; \mathbb{A}) \preceq Q_{-p}(\omega; \mathbb{A})$ for $0 \leq p \leq q \leq 1$.
  Applying the duality of quasi-arithmetic mean \eqref{E:quasi-dual} to the above result (2) with $-1 \leq -q \leq -p \leq 0$,
\begin{equation*}
Q_{-q}(\omega; \mathbb{A}) = Q_{q}(\omega; \mathbb{A}^{-1})^{-1} \preceq Q_{p}(\omega; \mathbb{A}^{-1})^{-1} = Q_{-p}(\omega; \mathbb{A}).
\end{equation*}

  \item[(4)] Using Lemma \ref{L:limit} and the property \eqref{E:guasi=LE}, we obtain $\mathrm{LE}(\omega; \mathbb{A}) \preceq Q_{p}(\omega; \mathbb{A})$ and $Q_{-p}(\omega; \mathbb{A}) \preceq \mathrm{LE}(\omega; \mathbb{A})$.
\end{itemize}
\end{proof}

\begin{corollary}
The quasi-arithmetic mean of order $p$ converges to the log-Euclidean mean as $p \to 0$ with respect to the near-order, in the sense that for $\mathbb{A} \in \mathbb{P}^{n}$ and $\omega \in \Delta_{n}$
\begin{displaymath}
\begin{split}
Q_{p}(\omega; \mathbb{A}) & \searrow_{\succeq} \mathrm{LE}(\omega; \mathbb{A}) \quad \textrm{as} \quad p \to 0^{+}, \\
Q_{p}(\omega; \mathbb{A}) & \nearrow_{\preceq} \mathrm{LE}(\omega; \mathbb{A}) \quad \textrm{as} \quad p \to 0^{-}.
\end{split}
\end{displaymath}
\end{corollary}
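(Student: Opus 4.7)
The plan is to extract both assertions directly from Theorem \ref{T:mono-Quasi} combined with the limit formula \eqref{E:guasi=LE}. Once monotonicity in the near-order and Thompson-metric convergence are both in hand, the two conclusions amount to rephrasing the chain in Theorem \ref{T:mono-Quasi} at the parameter endpoint $p = 0$.

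For the first statement, the first step is to fix $0 < p \leq q \leq 1$ and invoke Theorem \ref{T:mono-Quasi} to obtain
$$\mathrm{LE}(\omega; \mathbb{A}) \preceq Q_{p}(\omega; \mathbb{A}) \preceq Q_{q}(\omega; \mathbb{A}).$$
This says that the family $\{Q_{p}(\omega; \mathbb{A})\}_{p \in (0,1]}$ is monotone decreasing in the near-order as $p$ shrinks to $0^{+}$ and always dominates $\mathrm{LE}(\omega; \mathbb{A})$ in the same sense. I would then appeal to \eqref{E:guasi=LE} to identify $\mathrm{LE}(\omega; \mathbb{A})$ as the Thompson-metric limit $\lim_{p \to 0^{+}} Q_{p}(\omega; \mathbb{A})$; together these yield $Q_{p}(\omega; \mathbb{A}) \searrow_{\succeq} \mathrm{LE}(\omega; \mathbb{A})$ as $p \to 0^{+}$.

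The second statement follows by the symmetric half of the chain in Theorem \ref{T:mono-Quasi}: for $0 < p \leq q \leq 1$,
$$Q_{-q}(\omega; \mathbb{A}) \preceq Q_{-p}(\omega; \mathbb{A}) \preceq \mathrm{LE}(\omega; \mathbb{A}),$$
so the family $\{Q_{r}(\omega; \mathbb{A})\}_{r \in [-1,0)}$ is monotone increasing in the near-order as $r$ rises to $0^{-}$, with $\mathrm{LE}(\omega; \mathbb{A})$ sitting above every member. A second invocation of \eqref{E:guasi=LE} closes the argument.

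I do not anticipate a genuine obstacle here, since the corollary is essentially a rewording of Theorem \ref{T:mono-Quasi} at its endpoint. The only mildly subtle point is topological: one should confirm that Lemma \ref{L:limit} is applicable if one wishes to pass monotonicity bounds through a limit, but \eqref{E:guasi=LE} already provides Thompson-metric convergence of $Q_{p}$ to $\mathrm{LE}$, which is strictly stronger than the pointwise hypothesis required by Lemma \ref{L:limit}.
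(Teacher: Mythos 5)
Your argument is correct and is exactly how the paper intends the corollary to be read: the paper states it without a separate proof, as an immediate restatement of Theorem \ref{T:mono-Quasi} (which gives $\mathrm{LE} \preceq Q_{p} \preceq Q_{q}$ and $Q_{-q} \preceq Q_{-p} \preceq \mathrm{LE}$ for $0 \leq p \leq q \leq 1$) combined with the Thompson-metric convergence \eqref{E:guasi=LE}. Your added remark that Lemma \ref{L:limit} is not even needed here, since \eqref{E:guasi=LE} already supplies the limit, is accurate.
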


\begin{remark}
For $1 \leq p \leq q$ the following chain holds:
\begin{displaymath}
Q_{-q} \leq Q_{-p} \leq \mathcal{H} \preceq Q_{-1/p} \preceq Q_{-1/q} \preceq \mathrm{LE} \preceq Q_{1/q} \preceq Q_{1/p} \leq \mathcal{A} \leq Q_{p} \leq Q_{q}.
\end{displaymath}
\end{remark}

\section{Boundedness of R\'{e}nyi power mean with respect to the near-order}

The R\'{e}nyi power mean, introduced in \cite{DF20}, is defined as a unique solution $X \in \mathbb{P}$ of the equation
\begin{equation} \label{E:Renyi-power}
X = \sum_{j=1}^{n} w_{j} Q_{t, z}(A_{j}, X) = \sum_{j=1}^{n} w_{j} \left( A_{j}^{\frac{1-t}{2z}} X^{\frac{t}{z}} A_{j}^{\frac{1-t}{2z}} \right)^{z}
\end{equation}
for $0 \leq t < z \leq 1$. We denote it as $\mathcal{R}_{t,z}(\omega; \mathbb{A})$, where $\mathbb{A} = (A_{1}, \dots, A_{n}) \in \mathbb{P}^{n}$ and $\omega = (w_{1}, \dots, w_{n}) \in \Delta_{n}$.

\begin{remark} \label{R:Banach}
The function $f: \mathbb{P} \to \mathbb{P}$ defined by
\begin{displaymath}
f(X) = \sum_{j=1}^{n} w_{j} \left( A_{j}^{\frac{1-t}{2z}} X^{\frac{t}{z}} A_{j}^{\frac{1-t}{2z}} \right)^{z}, \quad X \in \mathbb{P}
\end{displaymath}
for $0 \leq t < z \leq 1$ is monotone increasing. Moreover, the Banach fixed point theorem yields that
\begin{displaymath}
\lim_{k \to \infty} f^{k}(Z) = \mathcal{R}_{t,z}(\omega; \mathbb{A}) \quad \textrm{for any} \ Z \in \mathbb{P}.
\end{displaymath}
\end{remark}

Let $p \in \mathbb{R}$, $S \in \mathrm{GL}$, and $\sigma$ a permutation on $\{ 1, \dots, n \}$.
For convenience, we denote
\begin{displaymath}
\begin{split}
\mathbb{A}^{p} & = (A_{1}^{p}, \ldots, A_{n}^{p}) \in \mathbb{P}^{n} \\
S \mathbb{A} S^{*} & = (S A_{1} S^{*}, \ldots, S A_{n} S^{*}) \in \mathbb{P}^{n} \\
\mathbb{A}_{\sigma} & = (A_{\sigma(1)}, \ldots, A_{\sigma(n)}) \in \mathbb{P}^{n} \\
\omega_{\sigma} & = (w_{\sigma(1)}, \ldots, w_{\sigma(n)}) \in \Delta_{n},
\end{split}
\end{displaymath}
and
\begin{displaymath}
\begin{split}
\mathbb{A}^{(k)} & = (\underbrace{A_{1}, \ldots, A_{n}}, \dots, \underbrace{A_{1}, \ldots, A_{n}}) \in \mathbb{P}^{nk} \\
\omega^{(k)} & = \frac{1}{k} (\underbrace{w_{1}, \ldots, w_{n}}, \dots, \underbrace{w_{1}, \ldots, w_{n}}) \in \Delta_{nk},
\end{split}
\end{displaymath}
whose number of blocks is $k \in \mathbb{N}$.

\begin{proposition} \label{P:Renyi}
Let $\mathbb{A} = (A_{1}, \dots, A_{n}) \in \mathbb{P}^{n}$ and $\omega = (w_{1}, \dots, w_{n}) \in \Delta_{n}$. Then
\begin{itemize}
  \item[(1)] $\displaystyle \mathcal{R}_{t,z}(\omega; \mathbb{A}) = Q_{1-t}(\omega; \mathbb{A}) \quad$ if $A_{j}$'s commute;
  \item[(2)] $\displaystyle \mathcal{R}_{t,z}(\omega; c \mathbb{A}) = c \mathcal{R}_{t,z}(\omega; \mathbb{A}) \quad$ for any $c > 0$;
  \item[(3)] $\displaystyle \mathcal{R}_{t,z}(\omega_{\sigma}; \mathbb{A}_{\sigma}) = \mathcal{R}_{t,z}(\omega; \mathbb{A}) \quad$ for any permutation $\sigma$ on $\{ 1, \dots, n \}$;
  \item[(4)] $\displaystyle \mathcal{R}_{t,z}(\omega^{(k)}; \mathbb{A}^{(k)}) = \mathcal{R}_{t,z}(\omega; \mathbb{A}) \quad$ for any $k \in \mathbb{N}$;
  \item[(5)] $\displaystyle \mathcal{R}_{t,z}(\omega; U \mathbb{A} U^{*}) = U \mathcal{R}_{t,z}(\omega; \mathbb{A}) U^{*} \quad$ for any unitary operator $U$;
  \item[(6)] $\displaystyle \mathcal{R}_{t,z}(\omega; \mathbb{A}^{-1}) \geq \mathcal{R}_{t,z}(\omega; \mathbb{A})^{-1}$;
  \item[(7)] $\displaystyle \Vert \mathcal{R}_{t,z}(\omega; \mathbb{A}) \Vert \leq \sum_{j=1}^{n} w_{j} \Vert A_{j} \Vert \quad$ for $1/2 \leq t < z \leq 1$.
\end{itemize}
\end{proposition}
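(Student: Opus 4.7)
Items (1)--(5) follow by substituting the claimed formula into the defining equation \eqref{E:Renyi-power} and invoking uniqueness from Remark~\ref{R:Banach}. For (1), since the $A_{j}$'s commute, $X = Q_{1-t}(\omega; \mathbb{A})$ lies in the commutative algebra they generate, so the factor $A_{j}^{(1-t)/(2z)} X^{t/z} A_{j}^{(1-t)/(2z)}$ collapses to $A_{j}^{(1-t)/z} X^{t/z}$, whose $z$-th power is $A_{j}^{1-t} X^{t}$; summing reproduces $X^{1-t} \cdot X^{t} = X$. For (2), pulling scalars through fractional powers extracts a total factor $c^{[(1-t)/z + t/z]\cdot z} = c$ from each summand. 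Items (3) and (4) are immediate from the symmetry of the summation under permutations and the exact balancing of the weights $1/k$ against the $k$-fold block repetition. For (5), the unitary equivariance $(U A_{j} U^{*})^{p} = U A_{j}^{p} U^{*}$ conjugates the right-hand side by $U$, so $U X U^{*}$ is the unique fixed point for the conjugated data.

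For (6), set $X = \mathcal{R}_{t,z}(\omega; \mathbb{A})$, $Y = \mathcal{R}_{t,z}(\omega; \mathbb{A}^{-1})$, and let $g$ denote the fixed-point map associated with $\mathbb{A}^{-1}$:
\[
g(W) = \sum_{j=1}^{n} w_{j} \bigl( (A_{j}^{-1})^{(1-t)/(2z)} W^{t/z} (A_{j}^{-1})^{(1-t)/(2z)} \bigr)^{z}.
\]
The key step is to apply operator convexity of the inversion map $\Phi \mapsto \Phi^{-1}$ to the fixed-point equation for $X$, which gives
\[
X^{-1} \leq \sum_{j=1}^{n} w_{j} \bigl( A_{j}^{(1-t)/(2z)} X^{t/z} A_{j}^{(1-t)/(2z)} \bigr)^{-z} = g(X^{-1}),
\]
where the identification uses $(A^{p} X^{q} A^{p})^{-1} = A^{-p} X^{-q} A^{-p}$ together with $A^{-p} = (A^{-1})^{p}$. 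Combined with the monotonicity of $g$ (Remark~\ref{R:Banach} applied to $\mathbb{A}^{-1}$), induction yields $X^{-1} \leq g^{k}(X^{-1})$ for every $k \in \mathbb{N}$; passing to the Thompson-metric limit $g^{k}(X^{-1}) \to Y$ gives $X^{-1} \leq Y$.

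For (7), take the operator norm of the defining equation. Using submultiplicativity together with $\|A^{p}\| = \|A\|^{p}$ for $A \in \mathbb{P}$ and $p > 0$, each summand is bounded by $\|A_{j}\|^{1-t} \|X\|^{t}$, so the triangle inequality leads to
\[
\|X\|^{1-t} \leq \sum_{j=1}^{n} w_{j} \|A_{j}\|^{1-t}.
\]
Raising to the $1/(1-t)$-th power (valid since $t < z \leq 1$ implies $1 - t > 0$) and invoking monotonicity of the power mean $r \mapsto (\sum_{j} w_{j} \|A_{j}\|^{r})^{1/r}$ at $r = 1 - t \in (0, 1/2]$ (guaranteed by $t \geq 1/2$) produces the stated bound.

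The main obstacle is (6): although operator convexity of inversion, monotonicity of $g$, and convergence of the Banach iteration are each standard, combining them so that a one-sided Loewner inequality at the starting point propagates through the iteration and survives passage to the limit requires careful bookkeeping of the direction of the inequalities. The remaining items reduce to routine substitutions or elementary norm estimates once the correct candidate or bound has been identified.
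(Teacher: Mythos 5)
Your argument for (6) is essentially the paper's proof in mirror image: the paper starts from $X=\mathcal{R}_{t,z}(\omega;\mathbb{A}^{-1})^{-1}$, applies the operator arithmetic--harmonic mean inequality (which is exactly your operator convexity of inversion) to obtain $X\le f(X)$ for the fixed-point map $f$ of $\mathbb{A}$, and then iterates $f$ to the limit $\mathcal{R}_{t,z}(\omega;\mathbb{A})$ --- the same three ingredients you use with the roles of $\mathbb{A}$ and $\mathbb{A}^{-1}$ interchanged. Items (1)--(5) and (7) are simply cited from \cite{DF20} in the paper, and your direct verifications of them are correct (your norm estimate for (7) in fact only needs $t\ge 0$, so the hypothesis $t\ge 1/2$ is not essential to your argument).
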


\begin{proof}
All items (1)-(5) and (7) are already proved in \cite[Proposition 4]{DF20}.

\noindent (6) Let $X = \mathcal{R}_{t,z}(\omega; \mathbb{A}^{-1})^{-1}$. Then $X^{-1} = \mathcal{R}_{t,z}(\omega; \mathbb{A}^{-1})$, and by the arithmetic-harmonic mean inequality
\begin{displaymath}
X = \left[ \sum_{j=1}^{n} w_{j} \left( A_{j}^{\frac{1-t}{2z}} X^{\frac{t}{z}} A_{j}^{\frac{1-t}{2z}} \right)^{-z} \right]^{-1} \leq \sum_{j=1}^{n} w_{j} \left( A_{j}^{\frac{1-t}{2z}} X^{\frac{t}{z}} A_{j}^{\frac{1-t}{2z}} \right)^{z} = f(X).
\end{displaymath}
By Remark \ref{R:Banach} $X \leq f(X) \leq \cdots \leq f^{k}(X)$ for all $k \geq 1$. Taking limit as $k \to \infty$ yields $X \leq \mathcal{R}_{t,z}(\omega; \mathbb{A})$.
\end{proof}

\begin{remark}
For $A_{1}, \dots, A_{n} \in \mathbb{P}_{m}^{n}$
\begin{equation} \label{E:log-det}
\det \mathcal{R}_{t,z}(\omega; \mathbb{A}) \geq \prod_{j=1}^{n} (\det A_{j})^{w_{j}}.
\end{equation}
Indeed, by using the joint concavity of $\log \det: \mathbb{P}_{m} \to \mathbb{R}$ to \eqref{E:Renyi-power} for $X = \mathcal{R}_{t,z}(\omega; \mathbb{A})$
\begin{displaymath}
\begin{split}
\log \det X & \geq z \sum_{j=1}^{n} w_{j} \log \det \left( A_{j}^{\frac{1-t}{2z}} X^{\frac{t}{z}} A_{j}^{\frac{1-t}{2z}} \right) \\
& = (1-t) \sum_{j=1}^{n} w_{j} \log \det A_{j} + t \log \det X.
\end{split}
\end{displaymath}
Thus, we obtain the determinantal inequality \eqref{E:log-det}. Furthermore, the equality in \eqref{E:log-det} holds if and only if $A_{1} = \cdots = A_{n}$.
\end{remark}

\begin{theorem} \label{T:Renyi-Quasi}
Let $\mathbb{A} = (A_{1}, \dots, A_{n}) \in \mathbb{P}^{n}$. Then for $0 \leq t < z \leq 1$
\begin{itemize}
  \item[(i)] $\displaystyle \mathcal{R}_{t,z}(\omega; \mathbb{A})^{\frac{1}{1-t}} \preceq Q_{1-t}(\omega; \mathbb{A}) \quad$ when \ $\mathcal{R}_{t,z}(\omega; \mathbb{A}) \leq I$,
  \item[(ii)] $\displaystyle \mathcal{R}_{t,z}(\omega; \mathbb{A})^{\frac{1}{1-t}} \succeq Q_{1-t}(\omega; \mathbb{A}) \quad$ when \ $\mathcal{R}_{t,z}(\omega; \mathbb{A}) \geq I$.
\end{itemize}
\end{theorem}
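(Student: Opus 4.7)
The plan is to apply the Loewner--Heinz inequality twice to the fixed-point equation \eqref{E:Renyi-power} defining the R\'enyi power mean, obtaining a Loewner inequality between $\mathcal{R}_{t,z}(\omega;\mathbb{A})$ and $\sum_j w_j A_j^{1-t}$, and then lift this to the required near-order inequality by raising to the power $1/(1-t) \geq 1$ via Proposition \ref{P:mono-p>1} (i). The hypothesis $\mathcal{R}_{t,z}(\omega;\mathbb{A}) \leq I$ (resp.\ $\geq I$) serves precisely to control the nonlinearity $X^{t/z}$ appearing inside the sum.

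For (i), set $X = \mathcal{R}_{t,z}(\omega;\mathbb{A})$ and assume $X \leq I$. Since $t/z \in [0,1)$, Loewner--Heinz gives $X^{t/z} \leq I$; conjugating by the positive operator $A_j^{(1-t)/(2z)}$ preserves the inequality, yielding
\begin{equation*}
A_j^{(1-t)/(2z)} X^{t/z} A_j^{(1-t)/(2z)} \leq A_j^{(1-t)/z}.
\end{equation*}
A second application of Loewner--Heinz with exponent $z \in (0,1]$ raises this to
\begin{equation*}
\bigl(A_j^{(1-t)/(2z)} X^{t/z} A_j^{(1-t)/(2z)}\bigr)^z \leq A_j^{1-t}
\end{equation*}
for each $j$. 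Taking the $\omega$-weighted sum and using \eqref{E:Renyi-power} produces
\begin{equation*}
X \leq \sum_{j=1}^n w_j A_j^{1-t} = Q_{1-t}(\omega;\mathbb{A})^{1-t}.
\end{equation*}
By \eqref{E:log} this Loewner inequality implies the near-order inequality $X \preceq Q_{1-t}(\omega;\mathbb{A})^{1-t}$. Since $t \in [0,1)$ forces $1/(1-t) \geq 1$, Proposition \ref{P:mono-p>1} (i) applies and gives $X^{1/(1-t)} \preceq Q_{1-t}(\omega;\mathbb{A})$, as required.

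For (ii), the argument is symmetric: under the assumption $X \geq I$, each inequality in the chain above reverses ($X^{t/z} \geq I$, then the conjugation step, then the outer $z$-th power via Loewner--Heinz), yielding $X \geq Q_{1-t}(\omega;\mathbb{A})^{1-t}$ and hence the corresponding near-order relation, which Proposition \ref{P:mono-p>1} (i) again lifts to $X^{1/(1-t)} \succeq Q_{1-t}(\omega;\mathbb{A})$. No substantive obstacle is expected; the only point to watch is that the hypothesis $0 \leq t < z \leq 1$ is used exactly to place both exponents $t/z$ and $z$ in $[0,1]$, so that Loewner--Heinz is available at both steps, and to guarantee $1/(1-t) \geq 1$ so that Proposition \ref{P:mono-p>1} (i) applies at the end.
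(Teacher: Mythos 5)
Your proposal is correct and follows essentially the same route as the paper: both derive the Loewner inequality $X \leq \sum_{j} w_{j} A_{j}^{1-t}$ from the fixed-point equation, pass to the near-order via \eqref{E:log}, and lift by the $1/(1-t)$-power using Proposition \ref{P:mono-p>1} (i). The only difference is that you spell out the two Loewner--Heinz steps behind the estimate $\bigl(A_{j}^{\frac{1-t}{2z}} X^{\frac{t}{z}} A_{j}^{\frac{1-t}{2z}}\bigr)^{z} \leq A_{j}^{1-t}$, which the paper states without detail.
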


\begin{proof}
Assume that $X = \mathcal{R}_{t,z}(\omega; \mathbb{A}) \leq I$ for $0 \leq t < z \leq 1$. Since $\left( A_{j}^{\frac{1-t}{2z}} X^{\frac{t}{z}} A_{j}^{\frac{1-t}{2z}} \right)^{z} \leq A_{j}^{1-t}$ for all $j$, we have
\begin{equation} \label{E:Renyi-Quasi}
X = \sum_{j=1}^{n} w_{j} \left( A_{j}^{\frac{1-t}{2z}} X^{\frac{t}{z}} A_{j}^{\frac{1-t}{2z}} \right)^{z} \leq \sum_{j=1}^{n} w_{j} A_{j}^{1-t}
\end{equation}
and $\displaystyle X \preceq \sum_{j=1}^{n} w_{j} A_{j}^{1-t}$. Since $\frac{1}{1-t} \geq 1$, taking the $1/(1-t)$-power on both sides and applying Proposition \ref{P:mono-p>1} (i) yield $\displaystyle \mathcal{R}_{t,z}(\omega; \mathbb{A})^{\frac{1}{1-t}} \preceq Q_{1-t}(\omega; \mathbb{A})$.

By the similar argument we can prove (ii).
\end{proof}

\begin{remark}
Theorem \ref{T:Renyi-Quasi} together with Theorem \ref{T:mono-Quasi} we obtain the near-order relation between the R\'{e}nyi power mean and log-Euclidean mean:
\begin{center}
$\displaystyle \mathcal{R}_{t,z}(\omega; \mathbb{A})^{\frac{1}{1-t}} \succeq \mathrm{LE}(\omega; \mathbb{A}) \quad$ when \ $\mathcal{R}_{t,z}(\omega; \mathbb{A}) \geq I$.
\end{center}
\end{remark}

\begin{theorem} \label{T:Renyi-LE}
Let $\mathbb{A} = (A_{1}, \dots, A_{n}) \in \mathbb{P}^{n}$. Then for $0 \leq t < z \leq 1$
\begin{equation} \label{E:Renyi-limit}
\lim_{p \to 0^{-}} \mathcal{R}_{t,z}(\omega; \mathbb{A}^{p})^{1/p} \preceq \lim_{p \to 0^{+}} \mathcal{R}_{t,z}(\omega; \mathbb{A}^{p})^{1/p},
\end{equation}
when the limits exist. In addition,
\begin{itemize}
  \item[(i)] $\displaystyle \lim_{p \to 0^{+}} \mathcal{R}_{t,z}(\omega; \mathbb{A}^{p})^{1/p} \preceq \mathrm{LE}(\omega; \mathbb{A}^{1-t}) \quad$ when $A_{j} \leq I$ for all $j$,
  \item[(ii)] $\displaystyle \lim_{p \to 0^{-}} \mathcal{R}_{t,z}(\omega; \mathbb{A}^{p})^{1/p} \succeq \mathrm{LE}(\omega; \mathbb{A}^{1-t}) \quad$ when $A_{j} \geq I$ for all $j$.
\end{itemize}
\end{theorem}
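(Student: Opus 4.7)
The plan is to apply Theorem \ref{T:Renyi-Quasi} to the tuple $\mathbb{A}^{p}$ for small $p$ and then pass to the limit via Lemma \ref{L:limit}. The organizing identity is
\begin{equation*}
\mathrm{LE}(\omega; \mathbb{A}^{1-t}) = \exp\!\left((1-t)\sum_{j=1}^{n} w_{j}\log A_{j}\right) = \mathrm{LE}(\omega; \mathbb{A})^{1-t},
\end{equation*}
which identifies the limit of a quasi-arithmetic expression in $\mathbb{A}$ with the target $\mathrm{LE}(\omega;\mathbb{A}^{1-t})$.

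For \eqref{E:Renyi-limit} I would apply Proposition \ref{P:Renyi} (6) to $\mathbb{A}^{p}$: since $(\mathbb{A}^{p})^{-1} = \mathbb{A}^{-p}$, it reads $\mathcal{R}_{t,z}(\omega; \mathbb{A}^{-p}) \geq \mathcal{R}_{t,z}(\omega; \mathbb{A}^{p})^{-1}$, which descends to the near-order by \eqref{E:log}. For $p \in (0,1]$ the exponent $-1/p$ satisfies $-1/p \leq -1$, so Proposition \ref{P:mono-p>1} (ii) reverses the near-order to give
\begin{equation*}
\mathcal{R}_{t,z}(\omega; \mathbb{A}^{-p})^{-1/p} \preceq \mathcal{R}_{t,z}(\omega; \mathbb{A}^{p})^{1/p}.
\end{equation*}
Substituting $q = -p$ on the left and sending $p \to 0^{+}$, Lemma \ref{L:limit} gives the comparison of one-sided limits.

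For (i) and (ii) the key preliminary is that $\mathcal{R}_{t,z}(\omega; \mathbb{A}^{p}) \leq I$ whenever $A_{j}^{p} \leq I$ for every $j$, which is the case for small $p > 0$ under the hypothesis of (i) and for small $p < 0$ under the hypothesis of (ii). Indeed, the fixed-point map
\begin{equation*}
f(X) = \sum_{j=1}^{n} w_{j} \left(A_{j}^{p(1-t)/(2z)} X^{t/z} A_{j}^{p(1-t)/(2z)}\right)^{z}
\end{equation*}
is monotone increasing by Remark \ref{R:Banach} and satisfies $f(I) = \sum_{j} w_{j} A_{j}^{p(1-t)} \leq I$, so the orbit $I \geq f(I) \geq f^{2}(I) \geq \cdots$ is decreasing and converges to the fixed point $\mathcal{R}_{t,z}(\omega; \mathbb{A}^{p}) \leq I$. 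Theorem \ref{T:Renyi-Quasi} (i) applied to $\mathbb{A}^{p}$ then yields
\begin{equation*}
\mathcal{R}_{t,z}(\omega; \mathbb{A}^{p})^{1/(1-t)} \preceq \left(\sum_{j=1}^{n} w_{j} A_{j}^{p(1-t)}\right)^{1/(1-t)}.
\end{equation*}

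For (i), when $0 < p \leq 1-t$ the exponent $(1-t)/p \geq 1$ is allowed by Proposition \ref{P:mono-p>1} (i), giving $\mathcal{R}_{t,z}(\omega; \mathbb{A}^{p})^{1/p} \preceq Q_{p(1-t)}(\omega; \mathbb{A})^{1-t}$. By \eqref{E:guasi=LE} the right-hand side tends to $\mathrm{LE}(\omega; \mathbb{A})^{1-t} = \mathrm{LE}(\omega; \mathbb{A}^{1-t})$ as $p \to 0^{+}$, and Lemma \ref{L:limit} closes (i). Case (ii) is the mirror image: for $-(1-t) \leq p < 0$ the exponent $(1-t)/p \leq -1$, so Proposition \ref{P:mono-p>1} (ii) reverses the inequality to $Q_{p(1-t)}(\omega; \mathbb{A})^{1-t} \preceq \mathcal{R}_{t,z}(\omega; \mathbb{A}^{p})^{1/p}$, and letting $p \to 0^{-}$ finishes the proof. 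The main bookkeeping obstacle is tracking the signs of the exponents $1/p$ and $(1-t)/p$ to decide whether Proposition \ref{P:mono-p>1} (i) or (ii) applies and hence the direction of the resulting near-order inequality; once the ``$\leq I$'' hypothesis has been propagated to $\mathcal{R}_{t,z}(\omega; \mathbb{A}^{p})$ through the Banach iteration, the remaining steps are routine.
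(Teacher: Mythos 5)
Your proposal is correct and follows essentially the same route as the paper: Proposition \ref{P:Renyi} (6) plus the power monotonicity of Proposition \ref{P:mono-p>1} for \eqref{E:Renyi-limit}, and the bound $\mathcal{R}_{t,z}(\omega;\mathbb{A}^{p}) \preceq \sum_{j} w_{j} A_{j}^{p(1-t)}$ raised to the power $1/p$ and passed to the limit via Lemma \ref{L:limit} for (i) and (ii). The only cosmetic differences are that you re-derive $\mathcal{R}_{t,z}(\omega;\mathbb{A}^{p}) \leq I$ from the monotone Banach iteration where the paper cites \cite[Lemma 5.2]{JK23}, and that you route the power step through Theorem \ref{T:Renyi-Quasi} with exponent $(1-t)/p$ (harmlessly restricting to $|p| \leq 1-t$) rather than through the intermediate Loewner inequality \eqref{E:Renyi-Quasi} with exponent $1/p$.
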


\begin{proof}
Replacing $A_{j}$ by $A_{j}^{p}$ in Proposition \ref{P:Renyi} (6), we have
\begin{displaymath}
\mathcal{R}_{t,z}(\omega; \mathbb{A}^{-p})^{-1} \leq \mathcal{R}_{t,z}(\omega; \mathbb{A}^{p})
\end{displaymath}
for $0 < p < 1$. So $\mathcal{R}_{t,z}(\omega; \mathbb{A}^{-p})^{-1} \preceq \mathcal{R}_{t,z}(\omega; \mathbb{A}^{p})$, and hence, by Proposition \ref{P:mono-p>1} (i)
\begin{displaymath}
\mathcal{R}_{t,z}(\omega; \mathbb{A}^{-p})^{-1/p} \preceq \mathcal{R}_{t,z}(\omega; \mathbb{A}^{p})^{1/p}.
\end{displaymath}
Taking limit as $p \to 0^{+}$ we obtain \eqref{E:Renyi-limit} when the limits exist.


We first show
\begin{equation} \label{E:Renyi-Q_p}
\mathcal{R}_{t,z}(\omega; \mathbb{A}^{p})^{1/p} \preceq Q_{p}(\omega; \mathbb{A}^{1-t})
\end{equation}
for $0 < p < 1$ when $A_{j} \leq I$ for all $j$.
Let $X = \mathcal{R}_{t,z}(\omega; \mathbb{A}^{p})$. Since $A_{j}^{p} \leq I$ for all $j$, $X \leq I$ by \cite[Lemma 5.2]{JK23}.
Then from \eqref{E:Renyi-Quasi} in the proof of Theorem \ref{T:Renyi-Quasi} (i)
\begin{displaymath}
\mathcal{R}_{t,z}(\omega; \mathbb{A}^{p}) \leq \sum_{j=1}^{n} w_{j} A_{j}^{(1-t)p}
\end{displaymath}
So $\displaystyle \mathcal{R}_{t,z}(\omega; \mathbb{A}^{p}) \preceq \sum_{j=1}^{n} w_{j} A_{j}^{(1-t)p}$, and we obtain \eqref{E:Renyi-Q_p} by Proposition \ref{P:mono-p>1} (i). Taking limit as $p \to 0^{+}$ and applying \eqref{E:guasi=LE} and Lemma \ref{L:limit}, we complete the proof of (i).

We can prove
\begin{displaymath}
\mathcal{R}_{t,z}(\omega; \mathbb{A}^{p})^{1/p} \succeq Q_{p}(\omega; \mathbb{A}^{1-t})
\end{displaymath}
for $-1 < p < 0$ by the similar argument as above with Proposition \ref{P:mono-p>1} (ii). Taking limit as $p \to 0^{-}$ and applying \eqref{E:guasi=LE} and Lemma \ref{L:limit} we obtain (ii).
\end{proof}

\section{Final remarks and open question}

By Theorem \ref{T:mono-Quasi} we have already proved the weighted arithmetic-$\mathrm{LE}$-harmonic mean inequalities for near-order:
\begin{equation} \label{E:LE-near}
\mathcal{H}(\omega; \mathbb{A}) \preceq \mathrm{LE}(\omega; \mathbb{A}) \preceq \mathcal{A}(\omega; \mathbb{A})
\end{equation}
for $\mathbb{A} = (A_{1}, \dots, A_{n}) \in \mathbb{P}^{n}$ and $\omega = (w_{1}, \dots, w_{n}) \in \Delta_{n}$.
On the other hand, we can simply prove \eqref{E:LE-near} by using operator concavity of the logarithmic map $\log : \mathbb{P} \to S(\mathcal{H})$.
Indeed, since
\begin{displaymath}
\sum_{j=1}^{n} w_{j} \log A_{j} \leq \log \left( \sum_{j=1}^{n} w_{j} A_{j} \right)
\end{displaymath}
and the left-hand side can be written as $\displaystyle \sum_{j=1}^{n} w_{j} \log A_{j} = \log \left( \mathrm{LE}(\omega; \mathbb{A}) \right)$, we obtain the second inequality of \eqref{E:LE-near} by \eqref{E:log}. Replacing $A_{j}$ by $A_{j}^{-1}$ we get the first inequality of \eqref{E:LE-near}.

A weighted $n$-mean ($n \geq 2$) is a map $G_{n} := G_{n}(\omega; \cdot): \mathbb{P}^{n} \to \mathbb{P}$ satisfying the idempotency, which means that $G_{n}(\omega; X, \dots, X) = X$ for all $X \in \mathbb{P}$. The multivariable Lie-Trotter mean is the weighted mean $G_{n}$ if it is differentiable and satisfies
\begin{equation} \label{E:Lie-Trotter}
\lim_{s \to 0} G_{n}(\omega; \gamma_{1}(s), \dots, \gamma_{n}(s))^{1/s} = \mathrm{LE}(\omega; \gamma_{1}'(0), \dots, \gamma_{n}'(0)),
\end{equation}
where for $\epsilon > 0$, $\gamma_{i}: (-\epsilon, \epsilon) \to \mathbb{P}$ are any differentiable curves with $\gamma_{i}(0) = I$ for all $i$. By \cite[Theorem 4.2]{HK17} if the weighted $n$-mean $G_{n}$ satisfies the weighted arithmetic-$G_{n}$-harmonic mean inequalities for Loewner order:
\begin{displaymath}
\mathcal{H} \leq G_{n} \leq \mathcal{A},
\end{displaymath}
then $G_{n}$ is the multivariable Lie-Trotter mean. We can provide the similar result for near-order, but it is a generalization of \cite[Theorem 4.2]{HK17}.
\begin{theorem} \label{T:Lie-Trotter}
Let the weighted $n$-mean $G_{n}$ satisfy the weighted arithmetic-$G_{n}$-harmonic mean inequalities for near-order:
\begin{equation} \label{E:assimption}
\mathcal{H} \preceq G_{n} \preceq \mathcal{A}.
\end{equation}
Then $G_{n}$ is the multivariable Lie-Trotter mean.
\end{theorem}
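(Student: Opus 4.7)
The plan is to upgrade the Loewner-order squeeze argument of \cite[Theorem 4.2]{HK17} to the near-order setting. Abbreviating $\mathbb{A}_s := (\gamma_1(s), \ldots, \gamma_n(s))$, the hypothesis \eqref{E:assimption} supplies the pointwise sandwich
$$\mathcal{H}(\omega; \mathbb{A}_s) \preceq G_n(\omega; \mathbb{A}_s) \preceq \mathcal{A}(\omega; \mathbb{A}_s)$$
for every $s$ in a neighborhood of $0$. The strategy is to raise this relation to the $1/s$-power by Proposition \ref{P:mono-p>1}, send $s \to 0$ through Lemma \ref{L:limit}, and exploit the fact that both outer limits coincide with $\mathrm{LE}(\omega; \gamma_1'(0), \ldots, \gamma_n'(0))$.

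First I would verify directly that the weighted arithmetic and harmonic means are Lie-Trotter. Since $\gamma_j(0) = I$, Taylor expansion yields $\gamma_j(s) = I + s\gamma_j'(0) + o(s)$ and $\gamma_j(s)^{-1} = I - s\gamma_j'(0) + o(s)$, so both $\mathcal{A}(\omega; \mathbb{A}_s)$ and $\mathcal{H}(\omega; \mathbb{A}_s)$ equal $I + s\sum_{j} w_j \gamma_j'(0) + o(s)$. Applying the operator logarithm and dividing by $s$ gives
$$\lim_{s \to 0} \mathcal{A}(\omega; \mathbb{A}_s)^{1/s} = \lim_{s \to 0} \mathcal{H}(\omega; \mathbb{A}_s)^{1/s} = \exp\Bigl(\sum_{j=1}^n w_j \gamma_j'(0)\Bigr) = \mathrm{LE}(\omega; \gamma_1'(0), \ldots, \gamma_n'(0)).$$
For $0 < s \leq 1$ the exponent $1/s \geq 1$, so Proposition \ref{P:mono-p>1}(i) preserves the near-order sandwich, while for $-1 \leq s < 0$ the exponent $1/s \leq -1$, so Proposition \ref{P:mono-p>1}(ii) reverses it; in either case $G_n(\omega; \mathbb{A}_s)^{1/s}$ lies between the $1/s$-powers of $\mathcal{H}(\omega; \mathbb{A}_s)$ and $\mathcal{A}(\omega; \mathbb{A}_s)$ with respect to $\preceq$.

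Passing to the limit is the delicate point, because Lemma \ref{L:limit} only transports $\preceq$ through limits already known to exist; unlike a Loewner squeeze, a near-order squeeze does not by itself guarantee convergence. This is the main obstacle, and it is resolved by the differentiability of $G_n$ built into the definition of a Lie-Trotter mean: combined with the idempotency $G_n(\omega; I, \ldots, I) = I$, the same Taylor-plus-logarithm argument ensures that $s \mapsto G_n(\omega; \mathbb{A}_s)^{1/s}$ admits a limit in $\mathbb{P}$ as $s \to 0$. Lemma \ref{L:limit} applied to each one-sided sandwich, together with the outer limits identified above, then yields both $\mathrm{LE}(\omega; \gamma_1'(0), \ldots, \gamma_n'(0)) \preceq \lim_{s \to 0} G_n(\omega; \mathbb{A}_s)^{1/s}$ and the reverse relation; antisymmetry of $\preceq$ forces equality, which is exactly \eqref{E:Lie-Trotter}.
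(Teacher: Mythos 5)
Your overall strategy coincides with the paper's: apply the near-order sandwich to $\gamma_j(s)$, raise to the power $1/s$ via Proposition \ref{P:mono-p>1} (part (i) for $0<s<1$, part (ii) for $-1<s<0$), pass to the limit with Lemma \ref{L:limit} using that $\mathcal{A}$ and $\mathcal{H}$ are Lie--Trotter means, and invoke antisymmetry of $\preceq$. That skeleton is correct, and you rightly identify the existence of $\lim_{s\to 0}G_n(\omega;\mathbb{A}_s)^{1/s}$ as the delicate point that a near-order squeeze does not settle by itself.

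However, your resolution of that point is circular. You justify the existence of the limit by ``the differentiability of $G_n$ built into the definition of a Lie--Trotter mean,'' but being a Lie--Trotter mean is precisely the conclusion of the theorem: the hypothesis only gives you an idempotent map $G_n:\mathbb{P}^n\to\mathbb{P}$ satisfying $\mathcal{H}\preceq G_n\preceq\mathcal{A}$, with no differentiability assumed. You cannot borrow differentiability from the definition of the object you are trying to show $G_n$ to be. The paper closes this gap differently: it observes that the proof of \cite[Lemma 4.1]{HK17} (which derives differentiability of a weighted mean from the arithmetic--harmonic sandwich in the Loewner order) carries over verbatim to the near-order, so differentiability of $G_n$ is a \emph{consequence} of the hypothesis \eqref{E:assimption}; only then does the Taylor-plus-logarithm argument you describe apply to give existence of the limit. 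To repair your proof you would need to supply this step, i.e., show that the near-order sandwich forces $G_n$ to be differentiable (or at least that $G_n(\omega;\mathbb{A}_s)=I+sD+o(s)$ along the given curves), rather than assuming it. The remainder of your argument, including the direct verification that $\mathcal{A}$ and $\mathcal{H}$ are Lie--Trotter means, is sound.
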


\begin{proof}
Since the proof of \cite[Lemma 4.1]{HK17} follows for near-order, the weighted $n$-mean $G_{n}$ is differentiable. It is enough to show \eqref{E:Lie-Trotter}. Let $\gamma_{i}: (-\epsilon, \epsilon) \to \mathbb{P}$ be any differentiable curves with $\gamma_{i}(0) = I$ for all $i$, where $0 < \epsilon < 1$. Then \eqref{E:assimption} implies
\begin{displaymath}
\left[ \sum_{j=1}^{n} w_{j} \gamma_{j}(s)^{-1} \right]^{-1} \preceq G_{n}(\omega; \gamma_{1}(s), \dots, \gamma_{n}(s)) \preceq \sum_{j=1}^{n} w_{j} \gamma_{j}(s).
\end{displaymath}
For $0 < s < 1$ we have from Proposition \ref{P:mono-p>1} (i)
\begin{displaymath}
\left[ \sum_{j=1}^{n} w_{j} \gamma_{j}(s)^{-1} \right]^{-1/s} \preceq G_{n}(\omega; \gamma_{1}(s), \dots, \gamma_{n}(s))^{1/s} \preceq \left[ \sum_{j=1}^{n} w_{j} \gamma_{j}(s) \right]^{1/s}.
\end{displaymath}
Since the weighted arithmetic and harmonic means are the multivariable Lie-Trotter means by \cite[Theorem 3.1]{HK17}, taking limit as $s \to 0^{+}$ and applying Lemma \ref{L:limit} yield
\begin{displaymath}
\mathrm{LE}(\omega; \gamma_{1}'(0), \dots, \gamma_{n}'(0)) \preceq \lim_{s \to 0^{+}} G_{n}(\omega; \gamma_{1}(s), \dots, \gamma_{n}(s))^{1/s} \preceq \mathrm{LE}(\omega; \gamma_{1}'(0), \dots, \gamma_{n}'(0)).
\end{displaymath}
Since the near-order satisfies the antisymmetric property, we obtain \eqref{E:Lie-Trotter}. For $-1 < s < 0$ we can get \eqref{E:Lie-Trotter} by similar arguments.
\end{proof}

One can naturally ask whether Theorem \ref{T:Lie-Trotter} holds even though the condition \eqref{E:assimption} is replaced by the entrywise eigenvalue relation or weak log-majorization. Unfortunately, it is not true because the entrywise eigenvalue relation and weak log-majorization do not satisfy the antisymmetric property.

\begin{corollary}
Let the weighted $n$-mean $G_{n}$ satisfy the weighted arithmetic-$G_{n}$-harmonic mean inequalities for weak log-majorization:
\begin{displaymath}
\mathcal{H} \prec_{w \log} G_{n} \prec_{w \log} \mathcal{A}.
\end{displaymath}
If the limit $\displaystyle \lim_{s \to 0} G_{n}(\omega; \gamma_{1}(s), \dots, \gamma_{n}(s))^{1/s}$ exists for differentiable curves $\gamma_{i}: (-\epsilon, \epsilon) \to \mathbb{P}_{m}$ with $\gamma_{i}(0) = I$ for all $i$, then
\begin{displaymath}
\lim_{s \to 0} G_{n}(\omega; \gamma_{1}(s), \dots, \gamma_{n}(s))^{1/s} =_{\lambda} \mathrm{LE}(\omega; \gamma_{1}'(0), \dots, \gamma_{n}'(0)).
\end{displaymath}
\end{corollary}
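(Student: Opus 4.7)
The plan is to mimic the proof of Theorem \ref{T:Lie-Trotter}, replacing the near-order $\preceq$ by weak log-majorization $\prec_{w \log}$, and then at the very end invoke the remark that two-sided weak log-majorization gives equality of eigenvalues. Concretely, given differentiable curves $\gamma_{i}: (-\epsilon, \epsilon) \to \mathbb{P}_{m}$ with $\gamma_{i}(0)=I$, the hypothesis applied pointwise gives
\begin{displaymath}
\left[ \sum_{j=1}^{n} w_{j} \gamma_{j}(s)^{-1} \right]^{-1} \prec_{w\log} G_{n}(\omega; \gamma_{1}(s), \dots, \gamma_{n}(s)) \prec_{w\log} \sum_{j=1}^{n} w_{j} \gamma_{j}(s)
\end{displaymath}
for every $s \in (-\epsilon,\epsilon)$.

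Next, for $0 < s < 1$ I would raise all three terms to the power $1/s$. This step is legitimate because $\lambda_{i}(X^{1/s}) = \lambda_{i}(X)^{1/s}$ for $X \in \mathbb{P}_{m}$, and raising a chain of inequalities among nonnegative products $\prod_{i=1}^{k}\lambda_{i}$ to a positive power preserves all inequalities. Thus the relation $\prec_{w\log}$ is preserved under $X \mapsto X^{1/s}$, yielding
\begin{displaymath}
\left[ \sum_{j=1}^{n} w_{j} \gamma_{j}(s)^{-1} \right]^{-1/s} \prec_{w\log} G_{n}(\omega; \gamma_{1}(s), \dots, \gamma_{n}(s))^{1/s} \prec_{w\log} \left[ \sum_{j=1}^{n} w_{j} \gamma_{j}(s) \right]^{1/s}.
\end{displaymath}

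Now I take the limit as $s \to 0^{+}$. By \cite[Theorem 3.1]{HK17} (used in the proof of Theorem \ref{T:Lie-Trotter}), both the arithmetic and harmonic means are Lie-Trotter means, so the outer limits both equal $\mathrm{LE}(\omega; \gamma_{1}'(0), \dots, \gamma_{n}'(0))$. The middle limit exists by hypothesis. Since $\prec_{w\log}$ is defined in terms of products of eigenvalues, which are continuous functions of the matrix entries, $\prec_{w\log}$ passes to limits; hence
\begin{displaymath}
\mathrm{LE}(\omega; \gamma_{1}'(0), \dots, \gamma_{n}'(0)) \prec_{w\log} \lim_{s \to 0^{+}} G_{n}(\omega; \gamma_{1}(s), \dots, \gamma_{n}(s))^{1/s} \prec_{w\log} \mathrm{LE}(\omega; \gamma_{1}'(0), \dots, \gamma_{n}'(0)).
\end{displaymath}
By the observation in the remark after equation \eqref{E:relations}, two-sided weak log-majorization forces $=_{\lambda}$, giving the claimed identity. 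The case $-1 < s < 0$ is handled identically after flipping the inequalities via Proposition \ref{P:mono-p>1}(ii), which still preserves $\prec_{w\log}$.

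The main obstacle I anticipate is the careful verification that $\prec_{w\log}$ behaves well under the two operations used: raising to a positive power and taking limits. Both are essentially routine once phrased in terms of ordered eigenvalues, but should be stated explicitly so that the squeeze argument is clean. One should also note that, since $\prec_{w\log}$ is not antisymmetric, the conclusion is necessarily weaker than Theorem \ref{T:Lie-Trotter}: we obtain only equality of spectra, not equality of operators, which is exactly why the statement reads $=_{\lambda}$ rather than $=$.
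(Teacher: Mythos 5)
Your argument is the paper's own proof, with more of the routine verifications written out: apply the two-sided weak log-majorization hypothesis along the curves, raise to the power $1/s$ for $0<s<1$, let $s\to 0^{+}$ and squeeze between the arithmetic and harmonic means (both Lie--Trotter means by \cite[Theorem 3.1]{HK17}), and conclude $=_{\lambda}$ because two-sided weak log-majorization forces equality of spectra. Your justifications that $\prec_{w\log}$ is preserved by $X\mapsto X^{p}$ for $p>0$ and by passage to limits are correct, and they are exactly the steps the paper leaves implicit.

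The one false note is your last sentence, on $-1<s<0$: you assert that the flip of Proposition \ref{P:mono-p>1}(ii) ``still preserves $\prec_{w\log}$''. It does not. Weak log-majorization is not reversed by inversion (hence not by $X\mapsto X^{p}$ for $p\le -1$): for instance $\mathrm{diag}(2,1)\prec_{w\log}\mathrm{diag}(8,\tfrac{1}{2})$, yet the inverses, with ordered spectra $(1,\tfrac{1}{2})$ and $(2,\tfrac{1}{8})$, satisfy neither direction of $\prec_{w\log}$. The reason is structural: $A\prec_{w\log}B$ controls products of the \emph{largest} eigenvalues, whereas $B^{-1}\prec_{w\log}A^{-1}$ would require control of products of the \emph{smallest}. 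Fortunately this step is dispensable: the corollary assumes the two-sided limit exists, so it necessarily coincides with the right-hand limit $s\to 0^{+}$ that you have already identified, and the negative-$s$ case can simply be omitted. (The paper's own proof also dismisses the negative-$s$ case with ``similarly'', so you are in good company, but your specific justification for it would not survive scrutiny and should be replaced by this remark.)
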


\begin{proof}
Following the proof of Theorem \ref{T:Lie-Trotter} we obtain
\begin{displaymath}
\mathrm{LE}(\omega; \gamma_{1}'(0), \dots, \gamma_{n}'(0)) \prec_{w \log} \lim_{s \to 0^{+}} G_{n}(\omega; \gamma_{1}(s), \dots, \gamma_{n}(s))^{1/s} \prec_{w \log} \mathrm{LE}(\omega; \gamma_{1}'(0), \dots, \gamma_{n}'(0))
\end{displaymath}
for $0 < s < 1$. So $\displaystyle \lim_{s \to 0^{+}} G_{n}(\omega; \gamma_{1}(s), \dots, \gamma_{n}(s))^{1/s} =_{\lambda} \mathrm{LE}(\omega; \gamma_{1}'(0), \dots, \gamma_{n}'(0))$ when the limit exists. Similarly for $-1 < s < 0$, we can obtain $\displaystyle \lim_{s \to 0^{-}} G_{n}(\omega; \gamma_{1}(s), \dots, \gamma_{n}(s))^{1/s} =_{\lambda} \mathrm{LE}(\omega; \gamma_{1}'(0), \dots, \gamma_{n}'(0))$.
\end{proof}

It would be an interesting problem to study whether the weak log-majorization relation generalizes to the near-order or entrywise eigenvalue relation. Note that \eqref{E:LE-near} is a generalization of \eqref{E:LE-arithmetic}. From this point of view, we give an open question:
\begin{displaymath}
\mathrm{LE}(\omega; \mathbb{A}) \preceq \Omega(\omega; \mathbb{A}).
\end{displaymath}

\vspace{4mm}

\textbf{Acknowledgement} \\

All authors contributed equally, and there is no conflict of interest.
The work of S. Kim was supported by the National Research Foundation of Korea grant funded by the Korea government (MSIT) (No. NRF-2022R1A2C4001306).
The work of J. Hwang was supported by Basic Science Research Program through the National Research Foundation of Korea funded by the Ministry of Education (No. NRF-2022R1I1A1A01068411).

\end{document}